\documentclass[11pt, a4paper,reqno]{amsart}
\usepackage[a4paper, margin=4cm]{}
\usepackage{amssymb,amsfonts, amsthm, xfrac, amscd}
\usepackage{setspace}
\usepackage{breqn}
\usepackage{times}
\usepackage{changes}
\usepackage{adjustbox}
\usepackage{color}
\usepackage{array}
\usepackage{times}

\usepackage[utf8]{inputenc} 
\usepackage{changes}
\usepackage{color}
\usepackage[hidelinks]{hyperref}

\allowdisplaybreaks[4]

\newtheoremstyle{myremark}     {10pt}{10pt}{}{}{\bfseries}{.}{.5em}{}

\newtheorem{thm}{Theorem}[section]
\newtheorem{cor}[thm]{Corollary}
\newtheorem{lem}[thm]{Lemma}
\newtheorem{pro}[thm]{Proposition}
\theoremstyle{definition}
\newtheorem{defn}[thm]{Definition}

\theoremstyle{myremark}
\newtheorem{rem}[thm]{Remark}
\usepackage{amsthm}

\numberwithin{equation}{section}
\begin{document}

	\title[Variance gamma approximation]{Variance gamma approximation to sums of triplewise independent random variables}

	 \author[Panda and Barman]{Aditi Panda and Kalyan Barman}
     \address{\hskip-\parindent
			Aditi Panda, Department of Mathematics, NIT Warangal,
		Warangal - 506004, India.}
		\email{ap25mar2r05@students.nitw.ac.in}
        
	 \address{\hskip-\parindent
		Kalyan Barman, Department of Mathematics, NIT Warangal,
		Warangal - 506004, India.}
	
	 \email{barmankalyan@nitw.ac.in}



		\subjclass[2020]{62E17; 60F05; 60E05}
	 \keywords{Variance gamma distribution, Stein's method, Triplewise independence, Counter examples to the central limit theorem.}
	
	 \begin{abstract}
In this article, we first discuss how triplewise independent random variables (rvs) are connected to a complete bipartite graph. Using the connection, we construct a sequence of triplewise independent rvs. We next consider a variance gamma (VG) approximation of sums of such triplewise independent rvs.  Using Stein's method and the generalized zero-bias transformation, we obtain our bounds. Related limit theorems are also discussed.
	 \end{abstract}

	\maketitle
	
\section{ Introduction} \label{Intro}
\noindent For a sequence of mutually independent and identically distributed (i.i.d.) random variables (rvs) $X_1, X_2,\ldots, X_n$ with $\mathbb{E}[X_i] = \mu$ and $\mathrm{Var}(X_i)=\sigma^2$, for $1\leq i \leq n$, where $0<\sigma<\infty$, it is known that the standardized partial sums
\begin{equation}
S_n := \frac{1}{\sigma\sqrt{n}}
\left(\sum_{k=1}^{n} X_k - n\mu \right) \overset{d}{\to} \mathcal{N}(0,1), \text{ as } n \to \infty,
\end{equation}
where $\overset{d}{\to}$ denotes the convergence in law. This result is known as the
Lindeberg-L\'evy central limit theorem (CLT); see \cite{Lindeberg1992} and \cite{Levy1925}. It is also known that mutual independence in general cannot be relaxed to the weaker notion of pairwise independence; see \cite{Avanzi2021}. Moreover, it cannot even be relaxed to triplewise independence; see \cite{BMO2021}. This article mainly focuses on approximating the partial sums of triplewise independent rvs.

\vskip 1ex
\noindent In general, the $K$-tuplewise independence is defined as follows (see, Definition 1 of \cite{Raik2025}).
\begin{defn}\label{Def:triplewise}
   Let $K\in\{2, 3, 4,\ldots\}$. An indexed family of rvs
$X_i$, $i\in I$, is $K$-tuplewise independent if the rvs
$X_{i_1},X_{i_2},\ldots,X_{i_K}$ are mutually independent for any
$K$-tuple of distinct indices $i_1,i_2,\ldots,i_K$. 
\end{defn}
 \noindent Counterexamples to the CLT can be traced back to \cite{RW1964}, who constructed a sequence of pairwise i.i.d. rvs taking the values $1$ and $-1$ with equal probabilities. By Theorem $1$ of \cite{RW1964}, the absolute values of the partial sums of that sequence can be bounded by a fixed random variable (rv), so that their standardized counterparts $S_n$ converge in distribution to zero. Pruss \cite{Pruss1998} constructs a counterexample to the CLT which is a sequence of $K$-tuplewise i.i.d. rvs, where $K$ can be arbitrary, and the marginal distribution can be any symmetric distribution with finite variance. Bradley and Pruss \cite{BradleyPruss2009} construct a sequence of $K$-tuplewise i.i.d rvs, which is strictly stationary. Recently, Avanzi {\it et al.} \cite{Avanzi2021} provide a survey of further constructions and construct a broad family of counterexamples for pairwise independence. Later, Beaulieu {\it et al.} \cite{BMO2021} modify the construction of Avanzi {\it et al.} \cite{Avanzi2021} to one which is based on a suitable sequence of graphs, each graph giving a family of $K$-tuplewise i.i.d. rvs. The rvs obtained from all graphs can be arranged into an array, each graph giving one row. They provide an increasing sequence of
graphs giving triplewise independent rows and standardized row sums converging in law to a variance-gamma (VG) distribution, which is not normal, see Subsection 4.1 of Beaulieu {\it et al. \cite{BMO2021}}. From that array, a sequence can be extracted such that its
standardized partial sums do not converge to a normal distribution because it has a subsequence that converges to the VG distribution. 

\vskip 1ex
\noindent In this article, we consider the VG approximation to the partial sums of triplewise independent rvs. The present article is, to the best of our knowledge, the first work to derive  bounds with respect to a probability metric for triplewise independent rvs.  Using Stein’s method and the generalized zero-bias transformation, we obtain error bounds for the approximation problem considered.

\vskip 1ex
\noindent The organization of the article is as follows. In Section \ref{Preliminary}, we discuss some preliminary results, which will be useful later. In Section \ref{pre}, we discuss the VG distribution and its related results. In Section \ref{MainResults}, we discuss our approximation results. We derive error bounds for the VG approximation to the partial sums of triplewise independent rvs.

\section{Notations and Preliminary Results}\label{Preliminary}
\noindent In this section, we introduce some notations and discuss important preliminary results. Let us first recall the definition of a complete bipartite graph ( see, for instance, \cite{BroHae2011} for more details).
\begin{defn}
    A complete bipartite graph is a special type of bipartite graph where the set of vertices is partitioned into two disjoint sets and each vertex of the first set gives an edge to each vertex of the second set.
\end{defn}
\noindent Next, we see how the triplewise independent rvs are connected to a complete bipartite graph (see, \cite{BMO2021} for more detail). Consider a complete bipartite graph ${G_m}$ with $m$ vertices in each set and $m^2$ edges. Let $\{v_1, v_2,\dotsc, v_m\}$ be the vertices in first set and $\{w_1, w_2, \dotsc, w_m\}$ be the vertices in second set. Let $n = m^2$. Let $M_1, M_2,..., M_{2m}$ be the sequence of i.i.d discrete uniform rvs on the set $\{1,2,\dotsc, l\}$. More precisely, $
 \mathbb{P}(M_j = i) = \frac{1}{l}, \text{ for all } i \in \{1,2,\dotsc, l\}.
 $ We now assign the rvs $M_1, M_2,..., M_m$ to each vertex of first set of vertices of $G_m$. We also assign the rvs $M_{m+1},\dotsc, M_{2m}$ to the second set of vertices of $G_m$. For every pair $1 \leq i \leq m$ and $m+1 \leq j \le 2m$, let us define
$$
D_{i,j} :=
\begin{cases}
1, & \text{if } M_i = M_j, \\
0, & \text{otherwise}.
\end{cases}
$$
Note that $D_{i,j}$ gives the edges that connect $M_i$ and $M_j$. Let $n$ be the total number of edges. We relabel the rvs $\{D_{i,j}\}$, ${1\leq i \leq j \leq n}$, as $D_1, D_2, \dotsc, D_n$. By Remark 2.1 of \cite{BMO2021} and Definition \ref{Def:triplewise}, we observe that the sequence $\{D_1, D_2, \dotsc, D_n\}$ is triplewise independent. Under the above setup, let us define
\begin{equation}\label{def:xin}
 W := \frac{\sum_{k=1}^n D_k - nl^{-1}}{\sqrt{nl^{-1}(1 - l^{-1})}}.
\end{equation}
\noindent The following theorem provides the asymptotic distribution of $W$ (see, Section 4.1 of \cite{BMO2021} for more detail).

\begin{thm} \label{Thm:BPG}
    Let $\{G_m\}_{m \geq 1}$ be the sequence of bipartite graphs, where i.i.d. discrete uniform rvs $M_1, \dotsc, M_{2m}$ are assigned to the vertices of $\{G_m\}$. That is, $ M_1, \dotsc, M_m$ are assigned to the $ m$ vertices of first set, and $M_{m+1}, \ldots, M_{2m}$ to the $m$ vertices of second set. Then, $W \overset{d}{\to} \frac{Z}{\sqrt{l-1}}$, as $m\to \infty$ (or, equivalently, as $n \to \infty$), where $Z \sim$ VG$(l-1,0,1,0)$ and VG denotes the variance-gamma distribution (see Definition \ref{Def:vg}).
\end{thm}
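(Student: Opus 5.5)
Write $N_c^{(1)} := \#\{1\le i\le m : M_i = c\}$ and $N_c^{(2)} := \#\{m+1\le j\le 2m : M_j = c\}$ for $c\in\{1,\dots,l\}$. The first step is to rewrite the edge sum in terms of these occupation counts:
\[
\sum_{k=1}^n D_k = \sum_{i=1}^m\sum_{j=m+1}^{2m} \mathbf{1}\{M_i=M_j\} = \sum_{c=1}^l N_c^{(1)} N_c^{(2)} .
\]
Once this is in place, the statement reduces to a multinomial central limit computation. The vectors $N^{(1)}=(N^{(1)}_c)_c$ and $N^{(2)}=(N^{(2)}_c)_c$ are independent, and each is Multinomial$(m;1/l,\dots,1/l)$.

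Next I will centre the counts, setting $X_c := (N_c^{(1)}-m/l)/\sqrt m$ and $Y_c := (N_c^{(2)}-m/l)/\sqrt m$. Because $\sum_c N_c^{(1)} = \sum_c N_c^{(2)} = m$, we have $\sum_c X_c = \sum_c Y_c = 0$. Expanding the product gives
\[
\sum_c N^{(1)}_c N^{(2)}_c = \frac{m^2}{l} + \frac{m^{3/2}}{l}\sum_c (X_c+Y_c) + m\sum_c X_cY_c = \frac{n}{l} + n\sum_c X_cY_c ,
\]
since the linear term vanishes. Hence
\[
W = \frac{\sqrt{n}\,\sum_c X_cY_c}{\sqrt{l^{-1}(1-l^{-1})}} = \frac{l\sqrt n}{\sqrt{l-1}}\cdot\frac{1}{\sqrt n}\sum_c X_cY_c ,
\]
and after rescaling, $W = \frac{l}{\sqrt{l-1}}\sum_c X_cY_c$ holds exactly. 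This identity is the key reduction, and the only real work is checking the normalisation carefully.

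For the limit, the multivariate CLT gives $X \overset{d}{\to} \xi$ and $Y\overset{d}{\to}\eta$, independent, where $\xi,\eta\sim\mathcal N(0,\Sigma)$ with $\Sigma = l^{-1}(I - l^{-1}\mathbf 1\mathbf 1^\top)$. The map $(x,y)\mapsto \sum_c x_cy_c$ is continuous, so by the continuous mapping theorem $W\overset{d}{\to}\frac{l}{\sqrt{l-1}}\langle\xi,\eta\rangle$.

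It remains to identify this limit. Since $l\Sigma$ is the orthogonal projection onto $\mathbf 1^\perp$, a space of dimension $l-1$, we can write $\xi = l^{-1/2}PU$ and $\eta=l^{-1/2}PV$ with $U,V$ independent standard Gaussian vectors in $\mathbb R^{l-1}$ after an orthogonal change of basis. This gives $\langle\xi,\eta\rangle = l^{-1}\sum_{r=1}^{l-1}U_rV_r$, and therefore
\[
W\overset{d}{\to} \frac{1}{\sqrt{l-1}}\sum_{r=1}^{l-1}U_rV_r .
\]
Finally, I will invoke the standard representation of the variance-gamma law: a sum of $r$ independent products of independent standard normals is VG$(r,0,1,0)$. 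This is checked through the characteristic function. Conditioning on $V$ gives $\mathbb E\, e^{itUV}=(1+t^2)^{-1/2}$, so the sum has characteristic function $(1+t^2)^{-(l-1)/2}$, which is the VG$(l-1,0,1,0)$ characteristic function in the parametrisation of Definition \ref{Def:vg}. I expect the main obstacle to be matching this parametrisation, since the remaining steps are bookkeeping.
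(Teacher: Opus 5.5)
Your proof is correct. The paper does not prove this theorem itself: it defers to Section~4.1 of \cite{BMO2021}. Only later, in the setup of Section~\ref{MainResults}, does it reproduce your first step, writing $W=\sqrt{l^{-1}(1-l^{-1})}\sum_i X_iY_i$ with the counts standardized by $\sqrt{(m/l)(1-1/l)}$ instead of $\sqrt m$. That is the same identity as your $W=\frac{l}{\sqrt{l-1}}\sum_c X_cY_c$. Your remaining steps give a complete, self-contained proof of the kind the cited source supplies: joint CLT for the two independent multinomial vectors, continuous mapping, diagonalizing the rank-$(l-1)$ projection $l\Sigma$, and recognizing $(1+t^2)^{-(l-1)/2}$ as the $\mathrm{VG}(l-1,0,1,0)$ characteristic function. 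Your route also handles the dependence among the summands $X_cY_c$ explicitly. The constraint $\sum_c X_c=0$ makes $\Sigma$ singular and leaves only $l-1$ independent products in the limit, which is exactly why the answer is $\mathrm{VG}(l-1,0,1,0)$ rather than $\mathrm{VG}(l,0,1,0)$. By contrast, the Stein-method part of the paper treats the $X_iY_i$ as independent.

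Two slips in your write-up should be fixed, though neither affects the conclusion. First, in the expansion the last term is $m\sum_c X_cY_c=\sqrt n\sum_c X_cY_c$, not $n\sum_c X_cY_c$. Your next display then introduces an unexplained factor $1/\sqrt n$, and the two errors cancel. So the final identity is right, but the displayed chain leading to it is not. Second, $\xi=l^{-1/2}PU$ with $U\in\mathbb R^{l-1}$ has mismatched dimensions. Take an $l\times(l-1)$ matrix $Q$ with orthonormal columns spanning $\mathbf 1^\perp$, so that $QQ^\top=P$ and $Q^\top Q=I_{l-1}$, and set $\xi=l^{-1/2}QU$, $\eta=l^{-1/2}QV$. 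Then $\langle\xi,\eta\rangle=l^{-1}U^\top Q^\top QV=l^{-1}\langle U,V\rangle$ directly. As a check on the normalization, $W$ has variance $1$ by pairwise independence of the $D_k$, and so does the limit $\frac{1}{\sqrt{l-1}}\sum_{r=1}^{l-1}U_rV_r$.
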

\noindent Next, we define Wasserstein and bounded Wasserstein distances (see \cite{GauntLi2023}). Consider the function space
\begin{align}\label{fs1}
\mathcal{H}_W &= \left\{h:\mathbb{R}\to \mathbb{R}~\big|~h \text{ is 1-Lipschitz and }|h(x)-h(y)|\leq |x-y|  \right\},
\end{align} 
\noindent
where $\|h\|=\sup_{x\in\mathbb{R}}|h(x)|$. Then, for any two random variables $Y$ and $Z$,
\begin{equation}\label{smwdis}
\sup_{h \in \mathcal{H}_W}\left|\mathbb{E}[h(Y)]-\mathbb{E}[h(Z)]\right|:=d_{W}(Y,Z)~(\text{say})
\end{equation}
\noindent
is called the Wasserstein distance. Also, when we choose the function space
\begin{align*}
	\mathcal{H}_{bW} &= \left\{h:\mathbb{R}\to \mathbb{R}~\big|~h \text{ is 1-Lipschitz, }\|h\| \leq 1 \text{ and }\|h^{\prime}\| \leq 1 \right\},
\end{align*}
\noindent so that $d_{\mathcal{H}_{bW}} :=d_{bW}$ (say) is called the bounded Wasserstein distance. 

\vskip 1ex
\noindent Next, we discuss components of Stein’s method. In general, the method is based on the fact that, any real-valued rv \(Z\) has a distribution \(F_Z\) if and only if there exists an operator \(A\) (also called the Stein operator) such that $\mathbb{E}(Af(Z))=0,$
where \(f\in\mathcal{F}\) (a suitable function space). This characterization leads us to the Stein equation
\begin{align}\label{STEQ:gen}
    Af(x)=h(x)-\mathbb{E}h(Z),
\end{align}
\noindent where \(h\) is a real-valued test function. Replacing \(x\) with a rv \(Y\) and taking expectations on both sides of \eqref{STEQ:gen} gives
\begin{align}\label{STEQ:gen1}
    \mathbb{E}h(Y)-\mathbb{E}h(Z)=\mathbb{E}(Af(Y)).
\end{align}
\noindent The equality \eqref{STEQ:gen1} plays a crucial role in Stein’s method. For a real valued test function \(h\), the problem of bounding the quantity $|\mathbb{E}h(Y)-\mathbb{E}h(Z)|$
relies on the bounds for the solution of \eqref{STEQ:gen} and behavior of \(Y\). For more details on Stein’s method, we refer to the reader \cite{ArrasHoudre2019,BarUp2024,ANV2022,UpBar2022} and the references therein.

\section{Variance gamma distribution and related results}\label{pre}
\noindent In this section, we discuss some important results related to the VG distribution. Let us first recall the definition of a VG distribution (see \cite{Gaunt2014,Gaunt2025}).
\begin{defn}\label{Def:vg}
    The VG distribution with parameters $r>0$, $\theta\in\mathbb{R}$, $\sigma>0$, $\mu\in\mathbb{R}$ has probability density function
\begin{equation}\label{den:VG}
p(x)=\frac{1}{\sigma\sqrt{\pi}\,\Gamma\!\left(\frac{r}{2}\right)}
e^{\frac{\theta}{\sigma^{2}}(x-\mu)}
\left(\frac{|x-\mu|}{2\sqrt{\theta^{2}+\sigma^{2}}}\right)^{\frac{r-1}{2}}
K_{\frac{r-1}{2}}
\left(
\frac{\sqrt{\theta^{2}+\sigma^{2}}}{\sigma^{2}}|x-\mu|
\right),
\end{equation}
with support $\mathbb{R}$. Here $K_{\nu}(x)$ is a modified Bessel function of the second kind, defined by $K_{\nu}(x)=\int_{0}^{\infty} e^{-x\cosh(t)}\cosh(\nu t)\,dt.$ For a rv $Z$ with density \eqref{den:VG}, we write $Z\sim \mathrm{VG}(r,\theta,\sigma,\mu)$.
\end{defn}  
\noindent Many probability distributions such as Laplace, product-normal, gamma, normal, among many others, belong to the VG family. For more details, we refer the reader to \cite{Gaunt2014,Gaunt2022,Gaunt2025}, and the references therein.

\vskip 1ex
\noindent Next, we discuss Stein's method for the VG distribution. The following proposition gives a Stein equation for the VG distribution (see \cite{Gaunt2022}).

\begin{pro}
    Let $Z\sim \mathrm{VG}(r,\theta,\sigma,\mu)$ with density given in \eqref{den:VG}. Then a Stein equation for $Z$ is given by
    \begin{align}\label{gen stein equn}
        \sigma^{2}(x-\mu)f^{\prime\prime}(x)+\left(\sigma^{2}r+2\theta(x-\mu)\right)f^\prime(x)+\left(r\theta-(x-\mu)\right)f(x)=h(x)-\mathbb{E}(h(Z)),
    \end{align}
    where $h\in \mathcal{H}$ is a real-valued test function.
\end{pro}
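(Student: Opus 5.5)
To prove the proposition, I will show that the operator
\[
\mathcal{A}f(x)=\sigma^{2}(x-\mu)f''(x)+\bigl(\sigma^{2}r+2\theta(x-\mu)\bigr)f'(x)+\bigl(r\theta-(x-\mu)\bigr)f(x)
\]
satisfies $\mathbb{E}[\mathcal{A}f(Z)]=0$ for all $f$ in a suitable class, say twice differentiable with $f,f',f''$ of at most polynomial growth. The natural route is integration by parts against the density $p$ in \eqref{den:VG}. First I reduce to $\mu=0$ by translation, since $\mathcal{A}$ depends on $x$ only through $x-\mu$.

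Write $p(x)=C e^{\theta x/\sigma^2}g(|x|)$ with $g(t)=t^{\nu}K_\nu(\beta t)$, where $\nu=(r-1)/2$ and $\beta=\sqrt{\theta^2+\sigma^2}/\sigma^2$. The key step is to show that $p$ solves the adjoint equation
\[
\sigma^2\,(x p)''-\bigl((\sigma^2 r+2\theta x)p\bigr)'+(r\theta-x)p=0 \qquad\text{on } x\neq 0 .
\]
Substituting $p=e^{\theta x/\sigma^2}g$ removes the first-order exponential terms. Using $\beta^2\sigma^4=\theta^2+\sigma^2$, this should reduce to the modified Bessel equation for $K_\nu$, in the form $t^2y''+ty'-(t^2+\nu^2)y=0$ after setting $g=t^\nu y(\beta t)$. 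This is a routine if tedious computation, and I would check the coefficients carefully, especially how the $\theta^2$ terms cancel against $\beta^2$.

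Next I integrate $\int \mathcal{A}f\,p$ by parts separately on $(-\infty,0)$ and $(0,\infty)$. The contributions at $\pm\infty$ vanish because $K_\nu(\beta|x|)$ decays like $e^{-\beta|x|}$ and $\beta>|\theta|/\sigma^2$, so the tilted density still decays exponentially while $f$ grows only polynomially. That leaves the terms at $x=0$. For the second-order term these are $\sigma^2[x p f' - (xp)' f]$, and for the first-order term they are $(\sigma^2 r+2\theta x)pf$, both evaluated from both sides.

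The main obstacle is the behaviour at the origin, where $p$ can be singular when $r\le 1$. I plan to use the small-argument asymptotics: $t^\nu K_\nu(\beta t)\to 2^{\nu-1}\Gamma(\nu)\beta^{-\nu}$ for $\nu>0$, there is a logarithmic singularity for $\nu=0$, and there is a $t^{2\nu}$ behaviour for $\nu<0$. In each case $xp(x)\to0$, and the remaining boundary terms from the two sides should cancel. The combination $\sigma^2[(xp)'-rp]$ near $0$ involves the derivative of $|x|^{\nu}K_\nu$, and its jump across $0$ should match. I would verify this using $\frac{d}{dt}\bigl(t^\nu K_\nu(t)\bigr)=-t^\nu K_{\nu-1}(t)$, which keeps the bookkeeping clean.

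This gives the characterisation $\mathbb{E}[\mathcal{A}f(Z)]=0$. The Stein equation \eqref{gen stein equn} then follows by setting $\mathcal{A}f=h-\mathbb{E}h(Z)$. Existence of a bounded solution for $h\in\mathcal{H}$ can be cited from Gaunt's variation-of-parameters solution, which the statement implicitly presupposes.
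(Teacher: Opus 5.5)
The paper gives no argument for this proposition; it only quotes the operator from Gaunt's work. Your proposal therefore does not reproduce a route from the paper but supplies one, and it is correct. What you prove, $\mathbb{E}[\mathcal{A}f(Z)]=0$, is exactly the property the later Stein-method bounds use. With $p=Ce^{\theta x/\sigma^2}g(|x|)$, the adjoint equation on $x\neq0$ reduces to $tg''+(2-r)g'-\beta^2tg=0$. Under $g=t^\nu y(\beta t)$ this is precisely $s^2y''+sy'-(s^2+\nu^2)y=0$, and $\beta>|\theta|/\sigma^2$ disposes of the terms at $\pm\infty$. Compared with the bare citation, your route is self-contained and shows exactly where continuity of $f$ at the origin is needed. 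The VG literature the paper relies on additionally gives the converse (that this identity characterises the VG law) and the existence and bounds for bounded solutions, which, as you say, you still have to import.

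One step at the origin should be sharpened. The coefficient of $f$ in the boundary term is $(\sigma^2r+2\theta x)p-\sigma^2(xp)'=Ce^{\theta x/\sigma^2}\bigl[\sigma^2\bigl(2\nu g(|x|)-|x|g'(|x|)\bigr)+\theta x g(|x|)\bigr]$. Suppose you expand $g'$ with $\frac{d}{dt}\bigl(t^\nu K_\nu(t)\bigr)=-t^\nu K_{\nu-1}(t)$, as planned. Then for $0<r<1$ you get two terms, each of exact order $|x|^{2\nu}\to\infty$. Their leading parts cancel only via $\Gamma(1-\nu)=-\nu\Gamma(-\nu)$, and you would still need the next order of the expansion to see boundedness.

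Use instead $\frac{d}{dt}\bigl(t^{-\nu}K_\nu(t)\bigr)=-t^{-\nu}K_{\nu+1}(t)$, or equivalently the recurrence $K_{\nu+1}(z)-K_{\nu-1}(z)=\frac{2\nu}{z}K_\nu(z)$. This gives $2\nu g(t)-tg'(t)=\beta t^{\nu+1}K_{\nu+1}(\beta t)$, which tends to $2^{\nu}\Gamma(\nu+1)\beta^{-\nu-1}$ for every $r>0$, since $\nu+1=(r+1)/2>0$.

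So the $f$-boundary terms do not vanish separately. The two sides contribute $\pm C\sigma^2 2^{\nu}\Gamma(\nu+1)\beta^{-\nu}f(0)$ in the limit, and these cancel because the coefficient has the same finite limit from both sides and $f$ is continuous at $0$. That is the precise content of your ``the jump should match''. The $f'$ terms vanish as you said, because $xp(x)\to0$.
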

\noindent The following corollary gives the Stein equation for $\mathrm{VG}(1,0,1,0)$ distribution. Note that the $\mathrm{VG}(1,0,1,0)$ distribution corresponds to the distribution of the product of two independent standard normal $\mathcal{N}(0,1)$ rvs, whereas the $\mathrm{VG}(n,0,1,0)$ distribution arises as a sum of $n$ independent copies of such rvs (see Theorem 1 of \cite{Gaunt2019}). 
\begin{cor}
        Let $Z\sim \mathrm{VG}(1,0,1,0)$ with density given in \eqref{den:VG}. Then a Stein equation for $Z$ is given by
    \begin{align}\label{SteinforZ}
         xf^{\prime\prime}(x)+f^\prime(x)-xf(x)=h(x)-\mathbb{E}(h(Z)),
    \end{align}
    where $h\in \mathcal{H}$ is a real-valued test function.
\end{cor}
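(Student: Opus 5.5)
This corollary is a direct specialization of the general VG Stein equation \eqref{gen stein equn}. The plan is to substitute the parameter values $r=1$, $\theta=0$, $\sigma=1$, $\mu=0$ into the left-hand side of \eqref{gen stein equn} and simplify term by term.

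The substitution goes as follows.
\begin{itemize}
\item The coefficient of $f''$ is $\sigma^2(x-\mu)$, which becomes $x$.
\item The coefficient of $f'$ is $\sigma^2 r+2\theta(x-\mu)$, which becomes $1$.
\item The coefficient of $f$ is $r\theta-(x-\mu)$, which becomes $-x$.
\end{itemize}
This gives exactly $xf''(x)+f'(x)-xf(x)=h(x)-\mathbb{E}h(Z)$, with $Z\sim\mathrm{VG}(1,0,1,0)$. Since the proposition holds for every admissible parameter choice ($r=1>0$ and $\sigma=1>0$ are allowed), nothing further is needed beyond noting that the test-function class $\mathcal{H}$ is the same.

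As a sanity check, and to justify the remark about products of normals, I would also verify directly that $\mathbb{E}[Zf''(Z)+f'(Z)-Zf(Z)]=0$. Take $Z=XY$ with $X,Y$ independent $\mathcal{N}(0,1)$, and apply Gaussian integration by parts in each coordinate separately:
\begin{itemize}
\item in $Y$: $\mathbb{E}[XYf(XY)]=\mathbb{E}[X^2f'(XY)]$;
\item in $X$: $\mathbb{E}[X^2 f'(XY)]=\mathbb{E}[f'(XY)]+\mathbb{E}[XYf''(XY)]$.
\end{itemize}
Combining these gives the identity. There is no real obstacle here. The only care needed is regularity: $f$ should be twice differentiable, with $f$, $f'$ and $xf''$ integrable against the VG density, so that the integration by parts is valid. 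For Lipschitz $h$, this regularity is supplied by the known bounds on the solution of \eqref{gen stein equn} in \cite{Gaunt2022}.
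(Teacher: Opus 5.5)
Your proposal is correct and follows the paper's route. The corollary is obtained by setting $r=1$, $\theta=0$, $\sigma=1$, $\mu=0$ in the general Stein equation \eqref{gen stein equn}, and your coefficient-by-coefficient substitution is all that is required. The Gaussian integration-by-parts check with $Z=XY$ is a correct optional extra that the paper does not include. Strictly, its first step also needs $\mathbb{E}|X^2 f'(XY)|<\infty$, essentially $\mathbb{E}|Zf'(Z)|<\infty$, which your listed integrability conditions do not state. This holds for the Stein solution because $\|xf'(x)\|$ is bounded.
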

\noindent The next result gives the solution to the Stein equation \eqref{SteinforZ}, which essentially follows from Lemma 2.3 of \cite{Gaunt2017}. Let \(C^k_b(\mathbb{R})\) be the space of $k$-times continuously differentiable functions on \(\mathbb{R}\) that, along with all their derivatives up to order $k$, are bounded.
\begin{pro}
    Let $h \in C^1_b(\mathbb{R}) $. Then the unique bounded solution 
$f_h : \mathbb{R} \to \mathbb{R}$ to the Stein equation \eqref{SteinforZ} is given by
\begin{equation}\label{Sol:SE}
f_h(x)=- K_0(|x|)\int_{0}^{x}I_0(y)
 \tilde{h}(y) dy 
- I_0(x)
\int_{x}^{\infty}
K_0(|y|)\tilde{h}(y)
dy,
\end{equation}
where $\tilde{h}(y)=h(y) - \mathbb{E}h(Z)$, $I_0(x)$ and $K_0(x)$ are modified Bessel functions, defined, for all $x \in \mathbb{R}, \nu \in \mathbb{R}$, by 
$$
I_{\nu}(x) = \sum_{k=0}^{\infty}
\frac{1}{\Gamma(\nu +k +1)k!}
\left(\frac{x}{2}\right)^{\nu + 2k} \text{ and }K_{\nu}(x)= \frac{\pi}{2 sin(\nu \pi)}(I_{-\nu}(x) - I_{\nu}(x)).
$$
\end{pro}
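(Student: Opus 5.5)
The plan is the classical variation-of-parameters argument for a second order linear ODE, specialised to the modified Bessel functions of order zero.

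First I rewrite the left-hand side of \eqref{SteinforZ} in self-adjoint form. Since $xf''(x)+f'(x)=(xf'(x))'$, the Stein equation reads
\begin{equation*}
(xf'(x))'-xf(x)=\tilde h(x), \qquad \tilde h(x)=h(x)-\mathbb{E}h(Z).
\end{equation*}
The homogeneous equation $xf''+f'-xf=0$ is the modified Bessel equation of order zero, so its fundamental system is $\{I_0(x),K_0(|x|)\}$ on each half-line. The Wronskian is $W(K_0,I_0)(x)=I_0(x)K_0'(x)\cdot(-1)+\dots$; more precisely, the standard identity $I_0(x)K_1(x)+I_1(x)K_0(x)=1/x$ for $x>0$ gives $x\,[I_0K_0'-I_0'K_0](x)=-1$. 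The Sturm--Liouville structure makes $x\cdot W$ constant, and that constant is what produces the unit coefficients in \eqref{Sol:SE}.

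Next, for $x>0$, variation of parameters with the Green's function $G(x,y)=-I_0(\min(x,y))K_0(\max(x,y))$ gives the candidate $f_h(x)=\int_0^\infty G(x,y)\tilde h(y)\,dy$, which is exactly \eqref{Sol:SE}. I then verify by direct differentiation that $f_h$ solves the equation:
\begin{itemize}
\item the boundary terms produced when differentiating the integrals cancel at first order;
\item at second order they leave $\tilde h(x)$, via the Wronskian identity.
\end{itemize}
For $x<0$ I use the evenness of $I_0$ and of $K_0(|\cdot|)$ and repeat the computation, with the integral $\int_x^\infty$ now crossing the origin. Here I must use that $\int_{-\infty}^{\infty}K_0(|y|)\tilde h(y)\,dy=0$, which holds because $K_0(|y|)/\pi$ is precisely the VG$(1,0,1,0)$ density. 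This centring condition is what makes the formula consistent across $x=0$ and yields continuity and differentiability there despite the logarithmic singularity of $K_0$.

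The main step, and the hardest, is boundedness. I use the asymptotics
\begin{itemize}
\item $I_0(x)\sim e^{x}/\sqrt{2\pi x}$ and $K_0(x)\sim\sqrt{\pi/(2x)}e^{-x}$ as $x\to\infty$;
\item $K_0(x)\sim -\log x$ as $x\to0$.
\end{itemize}
For $x\to\infty$, boundedness of $\tilde h$ gives $I_0(x)\int_x^\infty K_0\lesssim 1/x$ and $K_0(x)\int_0^xI_0\lesssim 1/x$, so $f_h$ is bounded, indeed tends to $0$. Near $0$, the product $K_0(|x|)\int_0^x I_0\tilde h$ is $O(|x|\log|x|)\to0$. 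For $x\to-\infty$ I use the centring identity to rewrite $\int_x^\infty=-\int_{-\infty}^x$, which gives the same estimates.

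For uniqueness, any other bounded solution differs from $f_h$ by $aI_0+bK_0(|\cdot|)$ on each half-line. $I_0$ grows exponentially, which forces $a=0$. Bounded differentiable matching at $0$ rules out the $\log$ singularity of $K_0$, which forces $b=0$. Alternatively, all of this can be quoted from Lemma 2.3 of \cite{Gaunt2017} with $r=1$, $\theta=0$, $\sigma=1$, $\mu=0$.
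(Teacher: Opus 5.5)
Your argument is correct, but it is not the route the paper takes. The paper gives no proof and simply specialises Lemma~2.3 of \cite{Gaunt2017} to $r=1$, $\theta=0$, $\sigma=1$, $\mu=0$, which is the fallback in your last sentence. Your main argument instead reconstructs the variation-of-parameters proof behind that lemma:
\begin{itemize}
\item the Abel constant $x\,(I_0K_0'-I_0'K_0)=-1$ and the Green's function $-I_0(\min(x,y))K_0(\max(x,y))$;
\item Bessel asymptotics, giving $f_h(x)=O(1/|x|)$ at $\pm\infty$ and boundedness near $0$;
\item exclusion of $aI_0+bK_0(|\cdot|)$ on each half-line for uniqueness.
\end{itemize}
Your version shows where each ingredient enters. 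The citation is shorter and comes packaged with the derivative bounds the paper uses next.

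One attribution in your write-up should be corrected. The centring identity $\int_{\mathbb{R}}K_0(|y|)\tilde h(y)\,dy=0$ is not needed to verify the equation for $x<0$. Nor is it needed for continuity or differentiability of $f_h$ at $0$:
\begin{itemize}
\item the pair $\{I_0,K_0(|\cdot|)\}$ has the same Abel constant $-1$ on $(-\infty,0)$;
\item $f_h'(x)=\mathrm{sgn}(x)K_1(|x|)\int_0^xI_0(y)\tilde h(y)\,dy-I_1(x)\int_x^\infty K_0(|y|)\tilde h(y)\,dy$, and $tK_1(t)\to1$, so $f_h'(0)=\tilde h(0)$ whether or not $\mathbb{E}\tilde h(Z)=0$.
\end{itemize}
This is exactly what the equation says at $x=0$, and you should state that check explicitly. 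The centring is needed only where you use it at the end: it replaces $\int_x^\infty$ by $-\int_{-\infty}^x$ and kills the exponential growth of $I_0(x)$ as $x\to-\infty$. If you also want $f_h''(0)$ to exist, so that the equation holds literally at $0$, that is where differentiability of $h$ enters. Both one-sided limits of $f_h''$ at $0$ equal $\tfrac12\bigl(h'(0)+f_h(0)\bigr)$.
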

\noindent Next, we discuss the properties of the solution to the Stein equation \eqref{Sol:SE}. The derivation of the following properties follows from Theorem 2.1 and Lemma 2.4 of \cite{Gaunt2017}.
\begin{lem}
    Suppose that $h \in C_b^3(\mathbb{R})$. Let $f_h$ be defined in \eqref{Sol:SE}. Then,
\[
\|f\| = 3
\|\tilde{h}
\|, ~\|f^{\prime}\|=\frac{3}{2}
\|\tilde{h}\|,~\|f^{\prime \prime}\|=2\|h^{\prime}\|+
5
\|\tilde{h}\|,
\]
\[
\|f^{(3)}\|=4\|h^{\prime \prime}\|
+5\|h^{\prime}\|+
4.89\|\tilde{h}\|,
\|f^{(4)}\|=8\|h^{(3)}\|+
9\|h^{\prime \prime}\|+
6.81\|h^{\prime}\|+
15.75
\|\tilde{h}
\|,
\]
and we also have
\[
\|x f(x)\|\le2\|\tilde{h}\|,~\|x f^{\prime}(x)\|\le\frac{3}{2}
\|\tilde{h}\|,~\|x f^{\prime \prime}(x)\| \le \frac{9}{2}
\|\tilde{h}\|,
\]
where $\|f\|=\sup_{x\in \mathbb{R}}|f(x)|$ and $\tilde{h}(x)=h(x)-\mathbb{E}(h(Z)),$ and $ Z\sim VG(1,0,1,0)$.
\end{lem}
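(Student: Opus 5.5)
This lemma is a specialisation of Theorem 2.1 and Lemma 2.4 of \cite{Gaunt2017} to the case $r=1$, $\theta=0$, $\sigma=1$, $\mu=0$. The plan is therefore to read off the general bounds rather than rederive them from scratch. Two remarks first. The statement writes the sup-norm estimates with ``$=$''; they should be read as upper bounds ``$\le$'', which is what \cite{Gaunt2017} provides. Also, the hypothesis $h\in C_b^3(\mathbb{R})$ is exactly what is needed for $f^{(4)}$ to exist and be bounded.

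\emph{Step 1: the representation \eqref{Sol:SE}.} Write $f=f_h$ as in \eqref{Sol:SE}. The bounds on $\|f\|$ and $\|f'\|$ come from differentiating \eqref{Sol:SE} once; the terms from differentiating the limits cancel by the Wronskian identity $I_0K_0'-I_0'K_0=-1/x$. One then pulls $\|\tilde h\|$ out of the integrals and bounds the resulting Bessel expressions uniformly in $x$. The needed facts are $K_0(|x|)\int_0^{|x|}I_0(y)\,dy$, $I_0(x)\int_x^\infty K_0(y)\,dy$ and their analogues with $I_1$, $K_1$; the symmetry $x\mapsto -x$ reduces everything to $x\ge 0$. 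These uniform Bessel inequalities are exactly what Lemma 2.4 of \cite{Gaunt2017} records, and they give the constants $3$ and $3/2$. The weighted bounds on $\|xf\|$ and $\|xf'\|$ follow the same way, using the weighted inequalities $xK_0(x)\int_0^x I_0$, $xI_0(x)\int_x^\infty K_0\le 1$ (and similar) from the same lemma.

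\emph{Step 2: higher derivatives via the equation.} For $f''$ I would not differentiate \eqref{Sol:SE} twice. Instead I would use \eqref{SteinforZ} to write $xf''=\tilde h-f'+xf$. This gives $\|xf''\|\le \|\tilde h\|+\tfrac32\|\tilde h\|+2\|\tilde h\|=\tfrac92\|\tilde h\|$ directly from Step 1. Bounding $f''$ itself near $x=0$ requires an estimate involving $h'$. Following Theorem 2.1 of \cite{Gaunt2017}, differentiate the Stein equation: $g=f'$ satisfies an equation of the same (VG-type, shifted parameter) form, with right-hand side $h'+f$ plus lower-order terms. Applying the first-order bounds to that equation yields $\|f''\|\le 2\|h'\|+5\|\tilde h\|$. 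Iterating this device, each differentiation raising the order by one and introducing one more derivative of $h$, gives the stated bounds for $f^{(3)}$ and $f^{(4)}$. At each iteration the new right-hand side is controlled by the bounds already obtained.

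\emph{Main obstacle.} The main difficulty is the iteration in Step 2. The differentiated equation is no longer the $\mathrm{VG}(1,0,1,0)$ equation but one with a shifted index ($I_1,K_1$ kernels, etc.). One must check that the solution of the differentiated equation is precisely $f^{(k)}$, i.e.\ the unique bounded solution, so that its representation is legitimate. The numerical constants ($4.89$, $6.81$, $15.75$) also have to be tracked. Here I would rely on \cite{Gaunt2017} verbatim for the Bessel-function inequalities, setting $r=1$, and only verify that the specialisation produces the listed constants.
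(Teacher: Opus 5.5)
Your proposal takes the same route as the paper, which gives no independent argument and simply attributes these bounds to Theorem 2.1 and Lemma 2.4 of \cite{Gaunt2017}. Your reading of the ``$=$'' signs as ``$\le$'' is right, and your derivation $\|xf''\|\le\|\tilde h\|+\|f'\|+\|xf\|\le\frac92\|\tilde h\|$ from \eqref{SteinforZ} correctly reproduces that constant; two harmless slips in your optional sketch are that the boundary terms $\mp I_0K_0\tilde h$ from differentiating \eqref{Sol:SE} once cancel identically (the Wronskian only enters at $f''$, producing the $\tilde h(x)/x$ term), and that $f'$ solves $xg''+2g'-xg=h'+f$ exactly, a $\mathrm{VG}(2,0,1,0)$ equation whose kernels are $x^{-1/2}I_{1/2}$, $x^{-1/2}K_{1/2}$ rather than $I_1$, $K_1$.
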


\begin{lem}
    Let $f$ be the solution of $Z$ of the Stein equation \eqref{SteinforZ}. Also, let $A_2f(x):=xf^{\prime\prime}(x)+f^\prime(x)$, $x\in \mathbb{R}$. Then 
    \begin{equation} \label{def:A_2f prime}
 \left\| (A_2f)^{\prime} \right\|
\leq
\left\| h^{\prime} \right\|
+  \frac{9}{2}\left\| \tilde{h}(x) \right\| \text{ and } \left\| (A_2 f)^{\prime \prime} \right\|
\leq
\left\| h^{\prime \prime} \right\|
+  \frac{15}{2}\left\|\tilde{h}(x) \right\|.
\end{equation}
\end{lem}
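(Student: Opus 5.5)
The approach is to use the Stein equation \eqref{SteinforZ} itself to rewrite $A_2 f$ in terms of quantities already controlled by the previous lemma. Since $f=f_h$ solves \eqref{SteinforZ}, we have for every $x\in\mathbb{R}$
\[
A_2 f(x) = x f''(x)+f'(x) = \tilde h(x) + x f(x).
\]
So bounding the derivatives of $A_2 f$ reduces to bounding the derivatives of $\tilde h$, which are just those of $h$, and of the product $x f(x)$.

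For the first derivative, we differentiate the identity above:
\[
(A_2 f)'(x)=h'(x)+f(x)+x f'(x).
\]
By the triangle inequality, $\|(A_2f)'\|\le \|h'\| + \|f + x f'\|$. The key point is that we will not bound $\|f\|$ and $\|xf'\|$ separately. With $\|f\|\le 3\|\tilde h\|$ and $\|xf'\|\le \tfrac32\|\tilde h\|$ from the previous lemma, the separate estimate already gives the constant $3+\tfrac32=\tfrac92$. This is exactly the constant claimed, so the first inequality follows immediately.

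For the second derivative, we differentiate once more:
\[
(A_2f)''(x)=h''(x)+2f'(x)+x f''(x).
\]
Using $\|f'\|\le\tfrac32\|\tilde h\|$ and $\|xf''\|\le\tfrac92\|\tilde h\|$, we get $2\cdot\tfrac32+\tfrac92=\tfrac{15}{2}$, which again matches the claimed bound.

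The only point needing care is the justification that the identity $A_2f=\tilde h+xf$ may be differentiated twice. This requires $f\in C^3$, which holds for $h\in C^2_b$ or $h\in C^3_b$ by the regularity results quoted from \cite{Gaunt2017}; in the worst case the identity can be read off from the explicit solution \eqref{Sol:SE}. There is also the removable point $x=0$, where the equation degenerates. Here we use that $f$ and its derivatives extend continuously, and that the sup-norm bounds of the previous lemma hold on all of $\mathbb{R}$.

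I therefore expect no genuine obstacle. The argument is the algebraic substitution followed by the triangle inequality, and the constants $\tfrac92$ and $\tfrac{15}{2}$ come out exactly from the constants $3$, $\tfrac32$ and $\tfrac92$ in the preceding lemma.
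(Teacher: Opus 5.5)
Your proof is correct. The paper states this lemma without proof, but its constants are exactly what your route produces: substitute $A_2f=\tilde h+xf$ from \eqref{SteinforZ}, differentiate, and use the preceding lemma to get $\|f\|+\|xf'\|\le(3+\tfrac32)\|\tilde h\|=\tfrac92\|\tilde h\|$ and $2\|f'\|+\|xf''\|\le(2\cdot\tfrac32+\tfrac92)\|\tilde h\|=\tfrac{15}{2}\|\tilde h\|$. Two minor fixes: delete the sentence saying you will not bound $\|f\|$ and $\|xf'\|$ separately, since that is precisely what you do and it suffices; and note that differentiating $\tilde h+xf$ twice needs only $h$ and $f$ twice differentiable, not $f\in C^3$.
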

\begin{rem}
Note that the quantity $\|(A_2f)^{\prime}\|$ in \eqref{def:A_2f prime} can alternatively be bounded using  the inequality (3.16) of \cite{Gaunt2020} as 
\begin{equation}\label{improved bound}
\|(A_2f)^{\prime}\| \leq \frac{99}{8}\|h^{\prime}\|.
\end{equation}
The advantages of this bound are discussed in Theorems \ref{thm:error1} and \ref{thm:err2}. Indeed, \eqref{improved bound} leads us to obtain the error bounds for the Wasserstein distance for the VG approximation to sums of triplewise independent rvs.
\end{rem}
\section{Main results}\label{MainResults}
\noindent In this section, we discuss our main results and their relevance to the literature. Before stating our results, we need the following setup.

\subsection{The setup} Let $\{G_m\}_{m \geq 1}$ be the sequence of bipartite graphs. Let $M_1, \dotsc, M_{2m}$ be a sequence of i.i.d. discrete uniform rvs that are assigned to the vertices of $\{G_m\}$. That is, $ M_1, \dotsc, M_m$ are assigned to the $ m$ vertices of first set, and $M_{m+1}, \dotsc, M_{2m}$ to the $m$ vertices of second set. Observe next that the number of vertices = $2m$ and the number of edges $n = m^2$. Define 
\[ 
N_i^{(1)}:={N_i}^{(1)}(m) = \text {the number of $M_j$'s equal to $i$ within the sample $\{M_j\}_{j=1}^{m}$}, \text{and}
\]
\[
N_i^{(2)}:={N_i}^{(2)}(m) = \text {the number of $M_j$'s equal to $i$ within the sample $\{M_j\}_{j=m+1}^{2m}$}.
\]
\noindent Then \textbf{$N^{(p)}$}$=({N_1}^{(p)}, \dotsc, {N_l}^{(p)}) \sim \text{Multinomial}(m,(\frac{1}{l}, \dotsc, \frac{1}{l})$ for $p=1,2$. Moreover \textbf{$N^{(1)}$} and \textbf{$N^{(2)}$} are independent. Let
\begin{align*}
I_{j} &:= 1_{\{{M_j = i\}}}
 =\begin{cases}
1, & \text{ if }  M_j = i, \\
0, & \text{otherwise.}    
\end{cases}
\end{align*}
\noindent By definition, we have ${N_i}^{(1)}= \sum_{j=1}^m I_j \text{  and  } {N_i}^{(2)}= \sum_{j=m+1}^{2m} I_j$, since $I_j \sim \text{Bernoulli} (\frac{1}{l}).$

\noindent Hence, we can write \eqref{def:xin} as
\begin{equation} \label{def:Xi_n}
 W = \frac{\sum_{i=1}^l {N_i}^{(1)}{N_i}^{(2)} - nl^{-1}}{\sqrt{nl^{-1}(1 - l^{-1})}}.
\end{equation}
 Note that $\mathbb{E}[{N_i}^{(1)}] = \mathbb{E}[{N_i}^{(2)}]= \frac{m}{l} $, $\mathrm{Var}({N_i}^{(1)})= \mathrm{Var}({N_i}^{(2)}) = \frac{m}{l}(1-\frac{1}{l})$ and $\mathbb{E}[{N_i}^{(1)}{N_i}^{(2)}] = \frac{m^2}{l^2}$.
\vskip 1ex
\noindent Define, 
\begin{equation}\label{def:X_i,Y_i}
X_i := \frac{{N_i}^{(1)} -  \frac{m}{l}}{\sqrt {\frac{m}{l}(1-\frac{1}{l})}} \qquad \text{ and } \qquad Y_i := \frac{{N_i}^{(2)} -  \frac{m}{l}}{\sqrt {\frac{m}{l}(1-\frac{1}{l})}},
\end{equation}
such that $\mathbb{E}[X_i] =  \mathbb{E}[Y_i] =0\text{ and } \mathrm{Var}(X_i) =\mathrm{Var}(Y_i) = 1.$

\vskip 1ex
\noindent Let 
\begin{equation}\label{def:X'_i,Y'_i}
   X^{\prime}_i := {N_i}^{(1)} -  \frac{m}{l} \qquad
   \text{ and } \qquad Y^{\prime}_i := {N_i}^{(2)} -  \frac{m}{l}.
\end{equation}
Hence from  \eqref{def:X_i,Y_i} and \eqref{def:X'_i,Y'_i} we get 
\begin{equation} \label{def:{N_i}^{(1)}{N_i}^{(2)}}
{N_i}^{(1)}{N_i}^{(2)} = \left(X^{\prime}_i + \frac{m}{l}\right) \left(Y^{\prime}_i + \frac{m}{l}\right) = X^{\prime}_iY^{\prime}_i + \frac{m}{l}( X^{\prime}_i + Y^{\prime}_i) +  \frac{m^2}{l^2}.
\end{equation}
So, from  \eqref{def:Xi_n} and \eqref{def:{N_i}^{(1)}{N_i}^{(2)}} we can write
\begin{align}\label{eqnnum1}
 \sum_{i=1}^l {N_i}^{(1)}{N_i}^{(2)}
    &= \sum_{i=1}^l X^{\prime}_iY^{\prime}_i + \sum_{i=1}^l \frac{m}{l}( X^{\prime}_i + Y^{\prime}_i) +  \frac{m^2}{l}.
\end{align}
\noindent Note that
\begin{equation}\label{eqnnum2}
    \sum_{i=1}^l X^{\prime}_i =  \sum_{i=1}^l ({N_i}^{(1)} -  \frac{m}{l}) = \sum_{i=1}^l {N_i}^{(1)} -  \sum_{i=1}^l  \frac{m}{l}
    = m - m
   = 0 \text{ and }\sum_{i=1}^l Y^{\prime}_i = 0.
\end{equation}

\vskip 1ex
\noindent From \eqref{def:X_i,Y_i} and \eqref{def:X'_i,Y'_i}, we have
\begin{equation}\label{eqnnum3}
     \sum_{i=1}^l X^{\prime}_iY^{\prime}_i =  \sum_{i=1}^l   \frac{m}{l}(1- \frac{1}{l}) X_iY_i
     =  \frac{m}{l}(1- \frac{1}{l})  \sum_{i=1}^l  X_iY_i.
\end{equation}
\noindent Using \eqref{eqnnum3} and \eqref{eqnnum2} in \eqref{eqnnum1}, we have

\begin{align}\label{eqnnum4}
    \sum_{i=1}^l {N_i}^{(1)}{N_i}^{(2)}
    &= \frac{m}{l}(1- \frac{1}{l})  \sum_{i=1}^l  X_iY_i+\frac{m^2}{l}.
\end{align}
\noindent Using \eqref{eqnnum4} in \eqref{def:Xi_n}, noting that $n=m^2$, we get
\begin{align}
    W = \sqrt{\frac{1}{l}\left(1-\frac{1}{l}\right)}\sum_{i=1}^lX_iY_i.
\end{align}
\noindent We now have the following lemmas, which will be used later.
\vspace{-0.5cm}
\begin{lem}
    Let $X_i$ and $Y_i$ be two rvs as defined in \eqref{def:X_i,Y_i}. Then
    \begin{equation}
 \mathbb{E} [X_i^{3}]=\mathbb{E} [Y_i^{3}] = \frac{1 - \frac{2}{l}}{\sqrt{\frac{m}{l}(1-\frac{1}{l})}}, \qquad l\ge 2 .       
    \end{equation}
    \noindent Moreover, 
    
    \begin{align}\label{bdtm1}
\mathbb{E} [|X_i^{3}|]= \mathbb{E} [|Y_i^{3}|] \leq \sqrt{\frac{l}{m}}, \qquad l\ge 2.
    \end{align}
\end{lem}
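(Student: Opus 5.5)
The first identity is a direct cumulant computation. By the setup, $N_i^{(1)}=\sum_{j=1}^m I_j$ with the $I_j$ i.i.d.\ $\mathrm{Bernoulli}(p)$, $p=1/l$, so $N_i^{(1)}\sim\mathrm{Bin}(m,p)$. I would write $X_i'=\sum_{j=1}^m (I_j-p)$, a sum of $m$ i.i.d.\ centred variables. Then the third moment is additive:
\[
\mathbb{E}[(X_i')^3]=m\,\mathbb{E}[(I_1-p)^3]=m\bigl(p(1-p)^3-(1-p)p^3\bigr)=mp(1-p)(1-2p).
\]
The cross terms vanish because each of them contains a factor $\mathbb{E}[I_j-p]=0$. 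Dividing by $\bigl(mp(1-p)\bigr)^{3/2}$ gives
\[
\mathbb{E}[X_i^3]=\frac{1-2p}{\sqrt{mp(1-p)}}=\frac{1-\frac{2}{l}}{\sqrt{\frac{m}{l}\left(1-\frac1l\right)}}.
\]
The same computation applies verbatim to $Y_i$, since $N^{(2)}$ has the same law as $N^{(1)}$.

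For the absolute moment \eqref{bdtm1}, my plan is to use Cauchy--Schwarz, $\mathbb{E}|X_i|^3\le \sqrt{\mathbb{E}X_i^2\,\mathbb{E}X_i^4}=\sqrt{\mathbb{E}X_i^4}$. The fourth moment of a standardized binomial is again a cumulant computation:
\[
\mathbb{E}X_i^4 = 3+\frac{1-6p(1-p)}{mp(1-p)}.
\]
Using $p(1-p)\ge \frac{1}{2l}$ for $l\ge2$, this yields $\mathbb{E}|X_i|^3\le\sqrt{3+2l/m}$.

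The main obstacle is reconciling this with the stated bound $\sqrt{l/m}$, and I expect that it cannot be done. As $m\to\infty$ with $l$ fixed, $X_i\overset{d}{\to}\mathcal N(0,1)$, and the uniformly bounded fourth moments give uniform integrability of $|X_i|^3$. Hence $\mathbb{E}|X_i|^3\to 2\sqrt{2/\pi}\approx1.6$, while $\sqrt{l/m}\to0$. So \eqref{bdtm1} as written fails for large $m$. Before using it in the main theorems I would check the source of the bound; presumably the intended quantity is unnormalised (e.g.\ $\mathbb{E}|X_i'|^3$ or $\mathbb{E}|I_j-p|^3$) or carries a different scaling.

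The version I would actually prove and carry forward is $\mathbb{E}|X_i|^3\le\sqrt{3+2l/m}$, which is of order one. Alternatively, bounding $\mathbb{E}|X_i|^3\le \mathbb{E}X_i^4$, I would use $\mathbb{E}|X_i|^3\le 1+\sqrt{l/m}$-type forms. The downstream error bounds only need $\mathbb{E}|X_i|^3$ to be bounded, multiplied by the $m^{-1/2}$ factors coming from the zero-bias step.
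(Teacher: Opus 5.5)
Your computation of $\mathbb{E}[X_i^3]$ is correct and takes a cleaner route than the paper. You use that third central moments add over the i.i.d.\ centred summands $I_j-\frac1l$, so only $m\,\mathbb{E}[(I_1-\frac1l)^3]$ survives. The paper instead expands $\mathbb{E}[(N_i^{(1)})^2]$ and $\mathbb{E}[(N_i^{(1)})^3]$ into sums over distinct indices and recombines the raw moments. Both give $\frac{m}{l}(1-\frac1l)(1-\frac2l)$, but yours avoids the bookkeeping.

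You are also right that \eqref{bdtm1} is false, and no limit theorem is needed to see it. Lyapunov's inequality gives $\mathbb{E}|X_i|^3\ge(\mathbb{E}X_i^2)^{3/2}=1$, which exceeds $\sqrt{l/m}$ for every $m>l$. The paper itself uses $\mathbb{E}|X|^3\ge 1$ in the proof of Lemma~\ref{moment of W-W*(2)}.

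Your guess about the source of the error is off, though: it is not an unnormalised quantity. The paper's proof begins by writing $\mathbb{E}|X_i|^3=\bigl(\frac ml(1-\frac1l)\bigr)^{-3/2}\mathbb{E}\bigl[(N_i^{(1)}-\frac ml)^3\bigr]$, silently dropping the absolute value. What it actually establishes, via $(l-2)^2\le l(l-1)$, is the true inequality $\mathbb{E}[X_i^3]=\frac{l-2}{\sqrt{m(l-1)}}\le\sqrt{l/m}$ for the signed moment. For $l=2$ the paper's identity would even assert $\mathbb{E}|X_i|^3=0$.

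Two remarks in your last paragraph should be dropped or corrected.
\begin{itemize}
\item A bound of the form $\mathbb{E}|X_i|^3\le 1+\sqrt{l/m}$ cannot hold. By your own argument, $\mathbb{E}|X_i|^3\to 2\sqrt{2/\pi}>1$ as $m\to\infty$. Keep $\sqrt{3+2l/m}$, which is correct.
\item The downstream results do not survive with an $O(1)$ bound. In Lemma~\ref{moment of W-W*(2)} there is no $m^{-1/2}$ factor coming from the zero-bias step. The estimate there is $\frac{1}{\sqrt l}\bigl[1+\frac14\mathbb{E}|X|^3\mathbb{E}|Y|^3\bigr]\ge\frac{5}{4\sqrt l}$, and all decay in $m$ came from \eqref{bdtm1}. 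With the correct moment bound, the $O(\sqrt{l}/m)$ rates in Theorems~\ref{thm:error1} and~\ref{thm:err2} are not recovered by that argument.
\end{itemize}
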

\vspace{ -.7cm}
\begin{proof}
Note that,
\begin{equation}
 \mathbb{E} [|X_i|^{3}] =  \mathbb{E} \Big[\Big(\frac{{N_i}^{(1)} -  \frac{m}{l}}{\sqrt {\frac{m}{l}(1-\frac{1}{l})}}\Big)^3\Big]
 = \frac{1}{\big(\frac{m}{l}(1-\frac{1}{l})\big)^{3/2}} \mathbb{E}\left[\left({N_i}^{(1)} -  \frac{m}{l}\right)^3\right],
\end{equation}
\noindent and
\begin{equation}\label{def:N_i}
{N_i}^{(1)} = I_1 + I_2 + \dotsc + I_m.
\end{equation}
\noindent Therefore,
\[
{N_i}^{(1)} - \frac{m}{l} = \sum_{j=1}^m (I_j - \frac{1}{l}).
\]
\noindent Hence,
\begin{align*}
   \mathbb{E} \Big[\Big({N_i}^{(1)} -  \frac{m}{l}\Big)^3\Big] 
&= \mathbb{E}[({N_i}^{(1)})^3]
-3\frac{m}{l}\,\mathbb{E}[({N_i}^{(1)})^2]
+3\frac{m^2}{l^2}\,\mathbb{E}[{N_i}^{(1)}]
- \frac{m^3}{l^3}.
\end{align*}
Using \eqref{def:N_i}, we have
$$
({N_i}^{(1)})^2=\bigg(\sum_{j=1}^m I_j\bigg)^2.
$$
Hence,
\[
({N_i}^{(1)})^2
=
\sum_{j=1}^m I_j^2
+
\sum_{j\neq k} I_j I_k.
\]
Since \(I_j^2=I_j\),
\begin{align}\label{Def:NI}
   ({N_i}^{(1)})^2
=
\sum_{j=1}^m I_j
+
\sum_{j\neq k} I_j I_k. 
\end{align}
\noindent Taking the expectation on both sides of \eqref{Def:NI}, we get
\[
\mathbb{E}[({N_i}^{(1)})^2]
=
\sum_{j=1}^m \mathbb{E}[I_j]
+
\sum_{j\neq k} \mathbb{E}[I_j I_k].
\]
So,
\[
\mathbb{E}[({N_i}^{(1)})^2] = \frac{m}{l} + m(m-1)\frac{1}{l^2}.
\]
\noindent Using \eqref{def:N_i}, we write
\[
({N_i}^{(1)})^3 =\sum_{j=1}^m I_j^3 
=
\sum_{j=1}^m I_j^3
+
3\sum_{j\neq k} I_j^2 I_k
+
\sum_{\substack{j,k,l\\ \text{distinct}}} I_j I_k I_l.
\]
Since \(I_j^2=I_j\) and \(I_j^3=I_j\),
\begin{align}\label{Def:NI1}
   ({N_i}^{(1)})^3
= \sum_{j=1}^m I_j
+ 3\sum_{j\neq k} I_j I_k + \sum_{\substack{j,k,l\\ \text{distinct}}} I_j I_k I_l. 
\end{align}

\noindent Taking the expectation on both sides of \eqref{Def:NI1}, we get
\[
\mathbb{E}[({N_i}^{(1)})^3]
=\sum_{j=1}^m \mathbb{E}[I_j] + 3\sum_{j\neq k} \mathbb{E}[I_j I_k]
+ \sum_{\substack{j,k,l\\ \text{distinct}}}
\mathbb{E}[I_j I_k I_l].
\]
Hence, we get
\[
\mathbb{E}[({N_i}^{(1)})^3]
= \frac{m}{l} + 3m(m-1)\frac{1}{l^2}
+ m(m-1)(m-2)\frac{1}{l^3}.
\]
Therefore,
\[
 \mathbb{E} \Big[\Big({N_i}^{(1)} -  \frac{m}{l}\Big)^3\Big] =  \frac{m}{l} (1 -  \frac{1}{l}) (1 - \frac{2}{l}).
\]
Moreover,
\begin{equation}
  \mathbb{E} [|X_i|^{3}] = \frac{1}{\Big(\frac{m}{l}(1-\frac{1}{l})\Big)^{3/2}} \frac{m}{l} (1 -  \frac{1}{l}) (1 - \frac{2}{l}) 
  = \frac{1 - \frac{2}{l}}{\sqrt{\frac{m}{l}(1-\frac{1}{l})}}, \qquad l\ge 2.
\end{equation}
Similarly, we obtain
\begin{equation}
 \mathbb{E} [|Y_i|^{3}] = \frac{1 - \frac{2}{l}}{\sqrt{\frac{m}{l}(1-\frac{1}{l})}}, \qquad l\ge 2.
\end{equation}
To find the upper bound of the third moment of $X_i$ and $Y_i$, we write
\begin{equation}
     \frac{1 - \frac{2}{l}}{\sqrt{\frac{m}{l}(1-\frac{1}{l})}} = \frac{l-2}{\sqrt{m(l-1)}} .
\end{equation}
\noindent Note that,
\begin{align*}
    (l-2)^2 &= l^2 - 4l + 4 \leq l^2 - l = l(l-1), ~l\geq 2.
\end{align*}
\noindent Hence,
\begin{equation}
     \mathbb{E} [|X_i|^{3}] =  \mathbb{E} [|Y_i|^{3}] \leq \sqrt{\frac{l}{m}}, \qquad l\ge 2.
\end{equation}
\noindent This proves the result.
\end{proof}
\noindent We now define the zero-biased distribution of order $n$ (see, \cite[Definition 1.1]{Gaunt2017} for more details).
\begin{defn}
 Let $W$ be a mean-zero rv with finite, non-zero variance $\sigma^2$. We say that $W^{*(n)}$ has the $W$-zero biased distribution of order $n$ if for all $n$ times
differentiable functions $f$ for which $\mathbb{E}[Wf(W)]$ exists,
\[
\mathbb{E}[Wf(W)]
= \sigma^2 \mathbb{E}\!\left[A_n f\left(W^{*(n)}\right)\right],
\]   
where $A_n f(x)=x^{-1}T^{n} f(x)$
and
$Tf(x)=x f^\prime (x)$. Here $T^{n}$ denotes $n$-times compositions of $T$, that is, $T^n = T \circ T \circ \cdots \circ T $ $(n \text{ times}).$
\end{defn}
\noindent Next, we present an important property of a zero-biased distribution similar to \cite[Lemma 1.1]{Gaunt2017}.
\begin{lem}\label{lem:zerobias}
 Let $Z_1, \ldots, Z_l$ be independent mean zero rvs with
$\mathbb{E}Z_i^2 = 1$. Set
$W :=\sum_{i=1}^l Z_i$
and $\mathbb{E}W^2 = l$ .
Let $I$ be a random index independent of the rv $Z_i$ such that $\mathbb{P}(I=i)=\frac{1}{l}.$ Let $W_i :=W-Z_i=\sum_{j\ne i} Z_j .$ Then $W_I+Z_I^{*}$
has the $W$-zero biased distribution, where $Z_I^{*}=Z_I^{*(1)}$ has the $Z_I$-zero biased distribution of order $1$.
\end{lem}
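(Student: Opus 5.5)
The plan is to verify the defining identity of the order-one zero-bias distribution directly. With $n=1$ we have $A_1 f(x)=x^{-1}Tf(x)=x^{-1}\cdot xf'(x)=f'(x)$. So $Z_i^{*}$ is characterised by $\mathbb{E}[Z_if(Z_i)]=\mathbb{E}[f'(Z_i^{*})]$ for all differentiable $f$ for which the left side exists. Since $\mathrm{Var}(W)=l$, we must show that for all such $f$,
\[
\mathbb{E}[Wf(W)] = l\,\mathbb{E}\big[f'(W_I+Z_I^{*})\big].
\]
We take $Z_i^{*}$ to be constructed independently of $W_i$ (and of $I$), which is the standard implicit convention in \cite[Lemma 1.1]{Gaunt2017}; we state it explicitly at the start.

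\textbf{Step 1.} By linearity, $\mathbb{E}[Wf(W)]=\sum_{i=1}^l \mathbb{E}[Z_i f(W_i+Z_i)]$. For each fixed $i$ we condition on $W_i$. This is legitimate because $W_i$ depends only on $\{Z_j\}_{j\neq i}$, which is independent of $Z_i$. For a fixed value $w$, we apply the zero-bias identity to the function $g_w(x)=f(w+x)$, noting that $g_w'(x)=f'(w+x)$. This gives $\mathbb{E}[Z_i f(w+Z_i)]=\mathbb{E}[f'(w+Z_i^{*})]$.

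\textbf{Step 2.} Integrating over the law of $W_i$ (Fubini, using the independence of $Z_i^{*}$ from $W_i$) yields $\mathbb{E}[Z_if(W)]=\mathbb{E}[f'(W_i+Z_i^{*})]$. Summing over $i$,
\[
\mathbb{E}[Wf(W)]=\sum_{i=1}^l \mathbb{E}[f'(W_i+Z_i^{*})] = l\sum_{i=1}^l \tfrac1l\,\mathbb{E}[f'(W_i+Z_i^{*})]=l\,\mathbb{E}[f'(W_I+Z_I^{*})].
\]
The last equality uses that $I$ is uniform and independent of all $Z_j$ and $Z_j^{*}$. Conditioning on $I=i$ recovers each summand. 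Since $\sigma^2=\mathbb{E}W^2=l$ and $A_1f=f'$, this is exactly the defining identity, so $W_I+Z_I^{*}$ has the $W$-zero biased distribution of order $1$.

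\textbf{Main obstacle.} The only delicate point is integrability in Steps 1 and 2. We need $\mathbb{E}[Wf(W)]$ finite to imply that each $\mathbb{E}[Z_if(W_i+Z_i)]$ is finite, and we need the conditional identity to hold for almost every $w$. I would handle this by first proving the identity for $f$ with bounded, compactly supported derivative, where every expectation is finite because $\mathbb{E}Z_i^2=1$. I would then pass to general $f$ by the usual truncation and dominated convergence argument, or simply restrict to the class of $f$ (bounded $f'$ with $f$ of linear growth) that is used later in the paper.
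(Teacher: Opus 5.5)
Your proposal is correct and follows essentially the same route as the paper: you expand $\mathbb{E}[Wf(W)]=\sum_{i}\mathbb{E}[Z_if(W_i+Z_i)]$, apply the order-one zero-bias identity to each $Z_i$ using the independence of $W_i$ and $Z_i$, and then rewrite the average over $i$ through the uniform index $I$. You also state explicitly that $Z_i^{*}$ is independent of $W_i$ and of $I$, and you address integrability; the paper's proof relies on both points without stating them.
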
 
\begin{proof}
Since $\mathbb{E}Z_i = 0$ and $\mathbb{E}Z_i^2 = 1$ for all $i=1,\dotsc,l$, we have $
\mathbb{E}W^2=\sum_{i=1}^l \mathbb{E}Z_i^2=l.$ Hence, for all smooth functions $f$, we write $\mathbb{E}[Wf(W)]=l\mathbb{E} [f'(W^{*})].$

\noindent Now,
\[
\mathbb{E}[Wf(W)]
=
\mathbb{E}\left[\left(\sum_{i=1}^l Z_i\right)f(W)\right]
=
\sum_{i=1}^l \mathbb{E}\big[Z_i f(W)\big].
\]
Since $W=W_i+Z_i$
and $W_i$ is independent of $Z_i$, applying the zero-bias identity we get
\[
\mathbb{E}\big[Z_i f(W)\big]
=
\mathbb{E}\big[Z_i f(W_i+Z_i)\big]
=
\mathbb{E}Z_i^2\mathbb{E} f'(W_i+Z_i^{*}).
\]
Note that
\[
\mathbb{E}\big[Z_i f(W)\big] = \mathbb{E} f'(W_i+Z_i^{*}),
\]
since $\mathbb{E}Z_i^2=1$. Therefore,
\[
l\mathbb{E} f'(W^{*})
=
\sum_{i=1}^l \mathbb{E} f'(W_i+Z_i^{*}).
\]
Now using the definition of random index and $P(I=i)=\frac{1}{l}$, we obtain
\[
\sum_{i=1}^l \mathbb{E} f'(W_i+Z_i^{*})
=
l \sum_{i=1}^l \frac{1}{l} \mathbb{E} f'(W_i+Z_i^{*})
=
l\mathbb{E} f'(W_I+Z_I^{*}).
\]
\noindent So,
\[
\mathbb{E} f'(W^{*}) = \mathbb{E} f'(W_I+Z_I^{*}).
\]
\noindent This completes the proof.
\end{proof}

\begin{lem} \label{moment of W-W*(2)}
    Let $X_i$ and $Y_i$ be defined in \eqref{def:X_i,Y_i} where $X_i$ follows the same distribution as $X$ and $Y_i$  follows the same distribution as $Y$ for $i = 1, 2, \dotsc, l$. Let $W_l = \frac{1}{\sqrt{l}} \sum_{i=1}^l Z_i$, where $Z_i=X_iY_i$. Then
\begin{align}
   \mathbb{E}\left|W_l - {W_l}^{*(2)}\right|  \leq  \frac{5}{4}\frac{\sqrt{l}}{m}, \qquad l\ge 2. 
\end{align}
\end{lem}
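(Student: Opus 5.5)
We follow the approach of \cite{Gaunt2017} for sums of products of independent mean-zero, unit-variance variables, and treat the $Z_i = X_iY_i$ as though they were independent across $i$. Strictly they are not, because of the multinomial constraint $\sum_i X_i' = 0$, so we also need a justification or modification of this step. The construction rests on two facts.

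\begin{enumerate}
\item \emph{Order-two zero bias of a single product.} For $Z = XY$ with $X, Y$ independent, mean zero and unit variance, the order-two zero-biased variable can be realised as
\[
Z^{*(2)} = X^{*}Y^{*},
\]
where $X^*, Y^*$ are the ordinary zero-bias versions of $X$ and $Y$, built independently. This is Gaunt's product construction: $\mathbb{E}[XYf(XY)] = \mathbb{E}[X^*Y^* f''(X^*Y^*) + f'(X^*Y^*)]$ comes from applying the zero-bias identity first in $X$ and then in $Y$.
\item \emph{Random-index construction for the sum.} Exactly as in Lemma~\ref{lem:zerobias}, applied to $A_2$ rather than $A_1$, we set
\[
W_l^{*(2)} = W_l - \frac{1}{\sqrt{l}}\bigl(Z_I - X_I^*Y_I^*\bigr),
\]
with $I$ uniform on $\{1,\dots,l\}$ and independent of everything else.
\end{enumerate}

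Consequently,
\[
\mathbb{E}\bigl|W_l - W_l^{*(2)}\bigr| = \frac{1}{\sqrt{l}}\,\mathbb{E}\bigl|X_IY_I - X_I^*Y_I^*\bigr| = \frac{1}{\sqrt{l}}\,\mathbb{E}\bigl|XY - X^*Y^*\bigr|.
\]

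To bound the last expectation we write
\[
XY - X^*Y^* = (X - X^*)Y + X^*(Y - Y^*).
\]
We choose couplings so that $(X, X^*)$ is independent of $(Y, Y^*)$. The triangle inequality and independence then give
\[
\mathbb{E}|XY - X^*Y^*| \le \mathbb{E}|X - X^*|\,\mathbb{E}|Y| + \mathbb{E}|X^*|\,\mathbb{E}|Y - Y^*|.
\]

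The individual factors are handled as follows.
\begin{itemize}
\item Since $\mathbb{E}Y^2 = 1$, Cauchy--Schwarz gives $\mathbb{E}|Y| \le 1$.
\item The zero-bias density gives $\mathbb{E}|X^*| = \tfrac12 \mathbb{E}|X|^3$.
\item By the standard zero-bias coupling bound (Goldstein--Reinert), there is a coupling with $\mathbb{E}|X - X^*| \le \tfrac12\mathbb{E}|X|^3$, and likewise for $Y$.
\end{itemize}
Combining these with \eqref{bdtm1} yields
\[
\mathbb{E}|XY - X^*Y^*| \le \tfrac12\,\mathbb{E}|X|^3 + \tfrac14\,\mathbb{E}|X|^3\,\mathbb{E}|Y|^3 \le \tfrac12\sqrt{\tfrac{l}{m}} + \tfrac14\,\tfrac{l}{m}.
\]

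The target bound, however, is $\tfrac54\,\tfrac{l}{m}$ before dividing by $\sqrt{l}$. Reaching it requires that the term linear in $\sqrt{l/m}$ is either absorbed or replaced by something of order $l/m$. If $l \le m$, then $\sqrt{l/m} \ge l/m$, so the linear term cannot simply be absorbed; the constants must be rearranged.

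For this we sharpen the estimate of $\mathbb{E}|X - X^*|$. Since $X$ is a standardized sum of $m$ i.i.d.\ centred Bernoulli$(1/l)$ variables, we use the zero-bias construction that replaces one summand by its zero-biased version, which is uniform on an interval of length $1/\sqrt{(m/l)(1-1/l)}$. This gives
\[
\mathbb{E}|X - X^*| \lesssim \sqrt{\tfrac{l}{m}}\cdot \mathbb{E}|I - 1/l| \cdot(\text{const}),
\]
and the factor $\mathbb{E}|I - 1/l| \approx 2/l$ should supply the extra decay. Applying the same argument to the $Y$ term and collecting constants should give
\[
\mathbb{E}|XY - X^*Y^*| \le \tfrac54\,\tfrac{l}{m},
\]
and hence the stated bound after dividing by $\sqrt{l}$.

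We expect two main obstacles.
\begin{itemize}
\item \emph{Constants.} Matching the exact constant $5/4$ and the scaling $l/m$ requires careful bookkeeping in the sharpened coupling estimate.
\item \emph{Dependence across $i$.} Rigorously, the $X_i$ are dependent across $i$, which breaks the random-index argument. We would handle this by invoking the lemma as stated, with the $X_i$ identically distributed, and by arguing, as the paper implicitly does, through the independent-summand model.
\end{itemize}
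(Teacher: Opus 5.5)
Your proposal has a genuine gap: the step meant to close the argument is false. The zero-bias of the centred Bernoulli $I-1/l$ is uniform on $[-1/l,\,1-1/l]$, an interval of length one for every $l$, so no factor $\mathbb{E}|I-1/l|\approx 2/l$ can appear. Concretely, put $s=(\tfrac{m}{l}(1-\tfrac1l))^{-1/2}$. Then $X$ lives on the lattice $s(\mathbb{Z}-m/l)$, while the replace-one-summand construction gives $X^*\overset{d}{=}s(N'+U-m/l)$, with $N'$ integer-valued and $U$ uniform on $[0,1]$ and independent of $N'$. Hence for \emph{every} coupling $\mathbb{E}|X-X^*|\ge \mathbb{E}\,\mathrm{dist}\bigl(X^*,s(\mathbb{Z}-m/l)\bigr)=s/4\ge \tfrac14\sqrt{l/m}$. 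Since also $\mathbb{E}|X^*|=\tfrac12\mathbb{E}|X|^3\ge\tfrac12$, the right-hand side $\mathbb{E}|X-X^*|\,\mathbb{E}|Y|+\mathbb{E}|X^*|\,\mathbb{E}|Y-Y^*|$ of your decomposition is at least $\tfrac18\sqrt{l/m}$. This exceeds $\tfrac54\tfrac{l}{m}$ once $m>100\,l$. So bounding the two pieces of $XY-X^*Y^*$ through separate marginal couplings gives at best $\mathbb{E}|W_l-W_l^{*(2)}|=O(m^{-1/2})$, and no bookkeeping of constants reaches $\sqrt{l}/m$. Your intermediate bound $\tfrac12\sqrt{l/m}+\tfrac14\tfrac lm$ is not justified either. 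It uses \eqref{bdtm1}, but $\mathbb{E}|X|^3\ge(\mathbb{E}X^2)^{3/2}=1$, so $\mathbb{E}|X|^3\le\sqrt{l/m}$ is impossible for $m>l$. The computation behind \eqref{bdtm1} is of the signed moment $\mathbb{E}X^3=(l-2)/\sqrt{m(l-1)}$. What is genuinely of order $\sqrt{l/m}$ is $m^{-1/2}$ times the absolute third moment of one standardized Bernoulli summand. That quantity controls $\mathbb{E}|X-X^*|$, not $\mathbb{E}|X^*|$.

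The paper's proof is cruder than yours and uses no coupling at all. It bounds $\mathbb{E}|Z_I-Z_I^{*(2)}|\le\mathbb{E}|Z_I|+\mathbb{E}|Z_I^{*(2)}|\le 1+\tfrac14\mathbb{E}|X|^3\mathbb{E}|Y|^3$, absorbs the $1$ via $\mathbb{E}|X|^3\mathbb{E}|Y|^3\ge1$, and then applies \eqref{bdtm1}. The obstruction you noticed, a term that cannot simply be absorbed, is exactly where that argument combines the incompatible inequalities $\mathbb{E}|X|^3\ge1$ and $\mathbb{E}|X|^3\le\sqrt{l/m}$. Following the paper would therefore not supply your missing step.

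The dependence issue you flag is also a real, unresolved gap. The $X_i$ satisfy $\sum_iX_i=0$. For $l=2$ one has $X_2=-X_1$ and $Y_2=-Y_1$, hence $W_2=\sqrt2\,X_1Y_1$ and $W_2^{*(2)}\overset{d}{=}\sqrt2\,X_1^*Y_1^*$. Your random-index formula breaks down here because the leave-one-out sum $W_2-Z_I/\sqrt2=Z_I/\sqrt2$ is not independent of $Z_I$. Appealing to an independent-summand model says nothing about the actual $W_l$. In this case the lemma demands $\mathbb{E}|X_1Y_1-X_1^*Y_1^*|\le\tfrac{5}{4m}$, with $X_1Y_1$ lattice-valued and $X_1^*Y_1^*$ absolutely continuous. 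That would require a genuinely joint coupling in which the lattice errors of the two factors cancel in the product. Neither your proposal nor the paper contains such an idea, and it is not clear the stated rate is even true.
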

\begin{proof}
Given $W_l = \frac{1}{\sqrt{l}} \sum_{i=1}^l Z_i$. By Lemma \ref{lem:zerobias}, we have
\[
{W_l}^{*(2)} = W_l - \frac{Z_I}{\sqrt{l}} +  \frac{{Z_I}^{*(2)}}{\sqrt{l}}.
\]
Now, 
\[
{W_l}^{*(2)} = \frac{1}{\sqrt{l}}\sum_{i=1}^l Z_i - \frac{Z_I}{\sqrt{l}} +  \frac{{Z_I}^{*(2)}}{\sqrt{l}}.
\]
Therefore,
\[
W_l - {W_l}^{*(2)} = \frac{Z_I}{\sqrt{l}} -  \frac{{Z_I}^{*(2)}}{\sqrt{l}} = \frac{1}{\sqrt{l}}(Z_I - {Z_I}^{*(2)}).
\]
Now,
\[
\mathbb{E}\left|W_l - {W_l}^{*(2)}\right| = \frac{1}{\sqrt{l}}\mathbb{E}\left|(Z_I - {Z_I}^{*(2)})\right| \leq \frac{1}{\sqrt{l}}\Big[\mathbb{E}|Z_I| + \mathbb{E}|{Z_I}^{*(2)}|\Big].
\]
Again we know,
\[
\mathbb{E}|Z_I| = \mathbb{E}|X_IY_I| \leq \sqrt{\mathbb{E}{{X_I}^2}} \sqrt{\mathbb{E}{{Y_I}^2}} = 1,
\]
and
\[
\mathbb{E}|Z_I^{*(2)}| = \mathbb{E}|X_I^{*}Y_I^{*}| = \mathbb{E}|X_I^{*}|\mathbb{E}|Y_I^{*}| = \frac{1}{4}\mathbb{E}|X|^{3}\mathbb{E}|Y|^{3}.
\]
Therefore, 
\begin{equation} \label{expectation inequality}
\mathbb{E}\left|W_l - {W_l}^{*(2)}\right| \leq \frac{1}{\sqrt{l}}\Big[ 1 + \frac{1}{4}\mathbb{E}|X|^{3}\mathbb{E}|Y|^{3} \Big], \qquad l \geq 2.
\end{equation}
\noindent By Holder's inequality for any rv with finite third moment, we have $\left(\mathbb{E}|X|^3\right)^{1/3}
\ge
\left(\mathbb{E}|X|^2\right)^{1/2}.$ Since $\mathbb{E}X^2=1,$ we have $\left(\mathbb{E}|X|^3\right)^{1/3}\ge 1,$ and therefore,
\begin{equation} \label{moment of X}
\mathbb{E}|X|^3 \ge 1.
\end{equation}
\noindent Similarly,
\begin{equation}\label{moment of Y}
    \mathbb{E}|Y|^3 \ge 1.
\end{equation}
\noindent Using \eqref{moment of X} and  \eqref{moment of Y} in \eqref{expectation inequality}, we have 
\begin{align}\label{bdpdtm}
\mathbb{E}\left|W_l - {W_l}^{*(2)}\right| \leq \frac{1}{\sqrt{l}}\mathbb{E}|X|^{3}\mathbb{E}|Y|^{3}\Big[ 1 + \frac{1}{4} \Big]
&= \frac{5}{4}\frac{1}{\sqrt{l}}\mathbb{E}|X|^{3}\mathbb{E}|Y|^{3}, \qquad l \geq 2.
\end{align}
Using \eqref{bdtm1} in \eqref{bdpdtm}, we get
\begin{equation*}
  \mathbb{E}\left|W_l - {W_l}^{*(2)}\right| \leq \frac{5}{4}\frac{\sqrt{l}}{m}, \qquad l \geq 2.  
\end{equation*}
\noindent This proves the result.
\end{proof}
\noindent The following result is a special case of \cite[Theorem 4.1]{Gaunt2017}, for the case $n=2$.
\begin{thm} \label{moment approximation}
    Let $W_l$ be defined as in Lemma \ref{moment of W-W*(2)} and $Z \sim \mathrm{VG(1,0,1,0)}$. Let $f$ be the solution of the $\mathrm{VG}(1,0,1,0)$ Stein equation \eqref{SteinforZ}. Assume that $(W_l,W_l^{*(2)})$ is given on a joint probability space so that
$W_l^{*(2)}$has the $W_l$-zero biased distribution of order $2$.
Then for $h\in C_b^1(\mathbb{R})$, we have
\begin{align}\label{thm:approxres1}
\Big|\mathbb{E}[h(W_l)] - \mathbb{E}[h(Z)]\Big|
\leq
 \left\| (A_2 f)^{\prime} \right\| \mathbb{E}\left| W_l-W_l^{*(2)} \right|.
\end{align}
\end{thm}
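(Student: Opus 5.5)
The plan is to follow the standard Stein's method argument for order-$n$ zero biasing in \cite[Theorem 4.1]{Gaunt2017}, specialised to $n=2$. The first step is to replace $x$ by $W_l$ in the Stein equation \eqref{SteinforZ} and take expectations:
\[
\mathbb{E}[h(W_l)]-\mathbb{E}[h(Z)]=\mathbb{E}\bigl[W_lf''(W_l)+f'(W_l)-W_lf(W_l)\bigr]=\mathbb{E}[A_2f(W_l)]-\mathbb{E}[W_lf(W_l)].
\]
Here $A_2f(x)=xf''(x)+f'(x)$, which agrees with the operator $x^{-1}T^2f(x)$ in the definition of the order-$2$ zero-biased distribution. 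Indeed, $T^2f(x)=x(xf'(x))'=x^2f''(x)+xf'(x)$.

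Next I will check the normalisation. By construction, $W_l$ has mean zero. Its variance is $\frac1l\sum_i\mathbb{E}[X_i^2]\mathbb{E}[Y_i^2]=1$, using the independence of $X_i$ and $Y_i$. This needs $\mathrm{Var}(W_l)=1$, so the cross terms $\mathbb{E}[X_iX_j]\mathbb{E}[Y_iY_j]$ must vanish or be absorbed into the normalisation. I will take this normalisation as given, as the statement does by invoking Lemma \ref{lem:zerobias}. Applying the defining identity of $W_l^{*(2)}$ with $\sigma^2=1$ to $f$ then gives $\mathbb{E}[W_lf(W_l)]=\mathbb{E}[A_2f(W_l^{*(2)})]$. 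To justify this, I need $f$ to be twice differentiable with $\mathbb{E}[W_lf(W_l)]$ finite, and this follows from the bounds on $\|f\|$, $\|f'\|$, $\|f''\|$ and $\|xf(x)\|$ in the lemma on properties of $f_h$.

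Substituting this identity into the first display, the difference becomes
\[
\mathbb{E}\bigl[A_2f(W_l)-A_2f(W_l^{*(2)})\bigr].
\]
Since $W_l$ and $W_l^{*(2)}$ live on a joint probability space, the mean value theorem gives $|A_2f(W_l)-A_2f(W_l^{*(2)})|\le\|(A_2f)'\|\,|W_l-W_l^{*(2)}|$ pointwise. Taking expectations then yields \eqref{thm:approxres1}.

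The main obstacle is regularity: the mean value step needs $(A_2f)'=xf^{(3)}+2f''$ to exist and be bounded. For $h\in C_b^1(\mathbb{R})$, I will obtain this from the bound \eqref{improved bound}, $\|(A_2f)'\|\le\frac{99}{8}\|h'\|$, taken from \cite{Gaunt2020}. That bound was derived using only the Lipschitz property of $h$, so $A_2f$ is Lipschitz even though $f^{(3)}$ need not be bounded. If that bound turns out to need more smoothness, I would first prove the inequality for $h\in C_b^3$, using \eqref{def:A_2f prime}, and then pass to $C_b^1$ by a density argument.
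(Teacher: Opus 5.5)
Your argument is the standard proof of \cite[Theorem 4.1]{Gaunt2017} with $n=2$, which is exactly what the paper cites. That argument needs $\sigma^2=1$ in the defining identity of $W_l^{*(2)}$. The normalisation you flag and then take as given is false here, and the proof breaks at that step.

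The $i\neq j$ terms you drop do not vanish. Since $N^{(1)}$ is multinomial, $\mathrm{Cov}(N_i^{(1)},N_j^{(1)})=-m/l^2$, so $\mathbb{E}[X_iX_j]=\mathbb{E}[Y_iY_j]=-\frac{1}{l-1}$ for $i\neq j$. Independence of the two count vectors only factorises these terms, giving $\mathbb{E}[X_iY_iX_jY_j]=\frac{1}{(l-1)^2}$. Hence
\[
\mathrm{Var}(W_l)=\frac1l\Bigl(l+\frac{l(l-1)}{(l-1)^2}\Bigr)=\frac{l}{l-1}\neq 1.
\]
This is consistent with $W=\sqrt{1-1/l}\,W_l$ in \eqref{def:xin} having unit variance. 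Lemma \ref{lem:zerobias} cannot supply the normalisation either, because it assumes independent summands, and the $X_iY_i$ are all built from the same two count vectors. With the correct $\sigma^2=\frac{l}{l-1}$, your computation gives
\[
\mathbb{E}h(W_l)-\mathbb{E}h(Z)=\mathbb{E}\bigl[A_2f(W_l)-A_2f(W_l^{*(2)})\bigr]-\frac{1}{l-1}\,\mathbb{E}\bigl[A_2f(W_l^{*(2)})\bigr].
\]
The last term is not controlled by $\mathbb{E}|W_l-W_l^{*(2)}|$.

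The gap cannot be patched, because the inequality itself fails for this $W_l$. Take $l=2$. Then $X_2=-X_1$ and $Y_2=-Y_1$, so $W_2=\sqrt2\,X_1Y_1$. Here $X_1=m^{-1/2}\sum_{j\le m}\epsilon_j$ and $Y_1$ are independent normalised sums of Rademacher variables.
\begin{itemize}
\item Conditioning first on $Y_1$ and then on $X_1^*$ shows that $W_2^{*(2)}=\sqrt2\,X_1^*Y_1^*$.
\item The usual coupling $X_1^*=X_1+(\epsilon_J^*-\epsilon_J)/\sqrt m$, with $J$ a uniform index and $\epsilon_J^*$ uniform on $(-1,1)$, and likewise for $Y_1$, gives $\mathbb{E}|W_2-W_2^{*(2)}|=O(m^{-1/2})$.
\item Meanwhile $\|(A_2f)'\|$ is a finite constant depending only on $h$, so the right-hand side tends to $0$.
\item However, $W_2\to\sqrt2\,Z$ in law. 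For $h=\cos$ the left-hand side therefore tends to $2^{-1/2}-3^{-1/2}>0$.
\end{itemize}

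What your argument actually proves is one of two corrected statements. The first is the bound for a unit-variance variable such as $\sqrt{(l-1)/l}\,W_l$. The second is the bound for $W_l$ with the extra term $\frac{1}{l-1}\|A_2f\|$ added. The statement must be changed in one of these ways before any proof can go through.
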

\noindent Next, we obtain an error bound for VG approximation to partial sums of triplewise independent rvs. Recently, Beaulieu {\it et al.} \cite[Subsection 4.1]{BMO2021} established a limiting result, which shows that the partial sums of triplewise independent rvs converge to a VG distribution.
\begin{thm}\label{thm:error1}
    Let $W_l$ be defined as in Lemma \ref{moment of W-W*(2)} and $Z \sim \mathrm{VG(1,0,1,0)}$. Assume that $(W_l,W_l^{*(2)})$ is given on a joint probability space so that
$W_l^{*(2)}$has the $W_l$-zero biased distribution of order $2$.
Then 
\begin{align}\label{vgbdbw1}
   d_{bW}(W_l,Z)
\leq
 \frac{5}{4}\frac{\sqrt{l}}{m}\Big[\left\| h^{\prime} \right\|
 + \frac{9}{2}
\left\| \tilde{h}(x) \right\|\Big], \qquad l \geq 2. 
\end{align}
 Also, we have
 
 \begin{align}\label{vgbdbw2}
    d_{W}(W_l,Z)
\leq
 \frac{495}{32}\frac{\sqrt{l}}{m}, \qquad l\geq 2. 
 \end{align}
\end{thm}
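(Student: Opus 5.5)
The proof combines three earlier results: Theorem \ref{moment approximation}, the bound of Lemma \ref{moment of W-W*(2)}, and the two available bounds on $\|(A_2f)^{\prime}\|$.

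\emph{Step 1: reduction.} Fix a test function $h$, say $h\in C_b^1(\mathbb{R})$, and let $f=f_h$ be the solution \eqref{Sol:SE} of the $\mathrm{VG}(1,0,1,0)$ Stein equation \eqref{SteinforZ}. By Theorem \ref{moment approximation},
\begin{equation*}
\left|\mathbb{E}[h(W_l)]-\mathbb{E}[h(Z)]\right| \leq \|(A_2f)^{\prime}\|\,\mathbb{E}\left|W_l-W_l^{*(2)}\right|.
\end{equation*}
The coupling of $(W_l,W_l^{*(2)})$ is the one from Lemma \ref{lem:zerobias} (replace a randomly chosen summand $Z_I$ by its order-two zero-bias version $Z_I^{*(2)}=X_I^{*}Y_I^{*}$). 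Lemma \ref{moment of W-W*(2)} then gives $\mathbb{E}|W_l-W_l^{*(2)}|\leq \frac{5}{4}\frac{\sqrt{l}}{m}$ for $l\ge 2$. It remains to bound $\|(A_2f)^{\prime}\|$ in the two settings.

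\emph{Step 2: bounded Wasserstein bound \eqref{vgbdbw1}.} Insert the first estimate in \eqref{def:A_2f prime}, namely $\|(A_2f)^{\prime}\|\le \|h^{\prime}\|+\frac{9}{2}\|\tilde h\|$. Multiplying by $\frac{5}{4}\frac{\sqrt{l}}{m}$ gives exactly the right-hand side of \eqref{vgbdbw1}. Taking the supremum over $h\in\mathcal{H}_{bW}$ gives the $d_{bW}$ statement, and on that class one also has $\|h^{\prime}\|\le1$ and $\|\tilde h\|\le 2$.

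One technical point needs care. The lemma giving \eqref{def:A_2f prime} rests on estimates stated for $h\in C_b^3(\mathbb{R})$, while $\mathcal{H}_{bW}$ only contains Lipschitz bounded functions. I would handle this by standard mollification. Approximate $h$ by $h_\varepsilon=h*\phi_\varepsilon$, which is smooth with $\|h_\varepsilon\|\le\|h\|$ and $\|h_\varepsilon^{\prime}\|\le\|h^{\prime}\|$. Apply the bound to $h_\varepsilon$, then let $\varepsilon\to0$ using $\|h-h_\varepsilon\|_\infty\le\varepsilon\|h^{\prime}\|$ on both $\mathbb{E}h(W_l)$ and $\mathbb{E}h(Z)$.

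\emph{Step 3: Wasserstein bound \eqref{vgbdbw2}.} For $h\in\mathcal{H}_W$ the function need not be bounded, so $\|\tilde h\|$ may be infinite and the estimate \eqref{def:A_2f prime} is useless. Instead I use the improved bound \eqref{improved bound}, $\|(A_2f)^{\prime}\|\le\frac{99}{8}\|h^{\prime}\|\le\frac{99}{8}$, which depends only on the Lipschitz constant. Then
\begin{equation*}
|\mathbb{E}h(W_l)-\mathbb{E}h(Z)|\le \frac{99}{8}\cdot\frac{5}{4}\frac{\sqrt{l}}{m}=\frac{495}{32}\frac{\sqrt{l}}{m},
\end{equation*}
and taking the supremum over $\mathcal{H}_W$ yields \eqref{vgbdbw2}.

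\emph{Main obstacle.} The step I expect to be delicate is justifying Theorem \ref{moment approximation} and \eqref{improved bound} for merely Lipschitz (possibly unbounded) $h$, since the solution \eqref{Sol:SE} was introduced for $h\in C_b^1$. I would first prove the estimate for smooth $h$ with bounded derivative, truncating $h$ if necessary. I would then pass to the limit by dominated convergence, which is allowed because $W_l$ and $Z$ have finite first moments. The arithmetic itself is immediate once these ingredients are in place.
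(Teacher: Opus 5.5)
Your proof has the same structure as the paper's: Theorem \ref{moment approximation}, then Lemma \ref{moment of W-W*(2)}, then the two bounds on $\|(A_2f)^{\prime}\|$, with $\frac{99}{8}\cdot\frac54=\frac{495}{32}$. Your mollification step is harmless. The trouble lies in the ingredients you use as black boxes, and the statement itself turns out to be false.

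\emph{(i) The coupling.} Replacing $Z_I$ by $Z_I^{*(2)}=X_I^*Y_I^*$ does not follow from Lemma \ref{lem:zerobias}. That lemma is about the order-one transform of a sum of \emph{independent} summands. Here $\sum_iX_i=\sum_iY_i=0$, so the $Z_i=X_iY_i$ are dependent; indeed $\mathrm{Cov}(Z_i,Z_j)=(l-1)^{-2}$. Even with independence, the order-two transform is not additive. One has $\mathbb{E}[Z_if(W_i+Z_i)]=\mathbb{E}\bigl[f^{\prime}(W_i+Z_i^{*(2)})+Z_i^{*(2)}f^{\prime\prime}(W_i+Z_i^{*(2)})\bigr]$, which lacks the term $W_if^{\prime\prime}(W_i+Z_i^{*(2)})$ contained in $A_2f(W_i+Z_i^{*(2)})$.

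\emph{(ii) No decay in $m$.} Even granting the coupling, the triangle inequality only gives $\mathbb{E}|W_l-W_l^{*(2)}|\le l^{-1/2}\bigl(1+\frac14\mathbb{E}|X|^3\mathbb{E}|Y|^3\bigr)$, which does not decay in $m$. The factor $1/m$ enters only through \eqref{bdtm1}, and \eqref{bdtm1} is false. The quantity $\frac{l-2}{\sqrt{m(l-1)}}$ computed there is the signed moment $\mathbb{E}[X_i^3]$. By contrast, $\mathbb{E}|X_i|^3\ge1$ by Lyapunov (the paper's own \eqref{moment of X}), and $\mathbb{E}|X_i|^3\to2\sqrt{2/\pi}$.

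\emph{(iii) Unit variance.} Theorem \ref{moment approximation} (Gaunt's Theorem 4.1) needs $\mathrm{Var}(W_l)=1$. Otherwise $\mathbb{E}[A_2f(W_l)-W_lf(W_l)]$ picks up the extra term $(1-\sigma^2)\mathbb{E}[A_2f(W_l^{*(2)})]$, which $\mathbb{E}|W_l-W_l^{*(2)}|$ does not control. Here $\mathrm{Cov}(X_i,X_j)=-\frac{1}{l-1}$ gives $\sigma^2=\mathrm{Var}(W_l)=\frac{l}{l-1}$.

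\emph{The statement fails.} None of this can be patched, as the case $l=2$ shows. There $X_2=-X_1$ and $Y_2=-Y_1$, so $W_2=\sqrt2\,X_1Y_1$, where $X_1,Y_1$ are independent standardized $\mathrm{Bin}(m,\frac12)$ variables. The hypothesis is not vacuous: $W_2^{*(2)}=\sqrt2\,X_1^*Y_1^*$ by the product rule $(XY)^{*(2)}=X^*Y^*$ and scaling, with $\sigma^2=2$. Yet $W_2\to\sqrt2\,Z$ in law with $\mathbb{E}W_2^2=2$. Taking the $1$-Lipschitz test function $h(x)=|x|$ gives $d_W(W_2,Z)\ge\bigl|\mathbb{E}|W_2|-\mathbb{E}|Z|\bigr|\to(\sqrt2-1)\frac{2}{\pi}\approx0.26$, whereas $\frac{495}{32}\frac{\sqrt2}{m}\to0$. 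Likewise $d_{bW}(W_2,Z)\to d_{bW}(\sqrt2Z,Z)>0$, contradicting \eqref{vgbdbw1} with $\|h^{\prime}\|\le1$ and $\|\tilde h\|\le2$. For general $l$, $W_l\to\frac{\sqrt l}{l-1}V$ with $V\sim\mathrm{VG}(l-1,0,1,0)$, consistent with Theorem \ref{Thm:BPG}. This limit has variance $\frac{l}{l-1}\neq1$, so it is never $\mathrm{VG}(1,0,1,0)$.

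So the regularity issue you flag as the main obstacle is a side point. What is actually missing is the correct target distribution and normalization, together with a valid coupling for which $\mathbb{E}|W-W^{*(2)}|$ is genuinely small.
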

\begin{proof}
(i) Using \eqref{def:A_2f prime} and Lemma \ref{moment of W-W*(2)} in \eqref{thm:approxres1}, we get \eqref{vgbdbw1}.

\vskip 1ex
\noindent (ii) Using \eqref{improved bound} and 
 Lemma \ref{moment of W-W*(2)} in \eqref{thm:approxres1}, we get \eqref{vgbdbw2}.
\end{proof}
 \noindent Next, we obtain an error bound in approximating a statistic that has an asymptotic $\mathrm{VG}(l-1,0,1,0)$ distribution.
\begin{thm}\label{thm:err2}
 Let $W_l$ be defined as in Lemma \ref{moment of W-W*(2)} and $Z \sim \mathrm{VG}(l-1,0,1,0)$. Then
\begin{align}\label{Aprroximationfinal}
    d_{bW}(W_l,Z)
\le
 \frac{5}{4}\frac{\sqrt{l}(l-1)}{m}\Big[\left\| h^{\prime} \right\|
+  \frac{9}{2}
\left\| \tilde{h}(x) \right\|\Big], \qquad l \geq 2.
\end{align}
Also, we have
\begin{align}\label{Aprroximationfinalone}
  d_{W}(W_l,Z)
\leq
 \frac{495}{32}\frac{\sqrt{l}(l-1)}{m}, \qquad l \geq 2.  
\end{align} 
\end{thm}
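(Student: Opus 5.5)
Theorem \ref{thm:err2} is the VG$(l-1,0,1,0)$ analogue of Theorem \ref{thm:error1}. I would obtain it by running the same Stein/zero-bias argument with $r=l-1$ in place of $r=1$, and letting the dependence on the shape parameter produce the extra factor $(l-1)$.

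\emph{Step 1: Stein operator and zero-bias structure.} By \eqref{gen stein equn} with $\theta=\mu=0$ and $\sigma=1$, the Stein operator for $\mathrm{VG}(l-1,0,1,0)$ is
\[
xf''(x)+(l-1)f'(x)-xf(x)=\big(A_{2,r}f\big)(x)-xf(x),
\qquad A_{2,r}f(x):=xf''(x)+rf'(x),\quad r=l-1 .
\]
Since $\mathrm{VG}(l-1,0,1,0)$ is the law of a sum of $l-1$ independent copies of a $\mathrm{VG}(1,0,1,0)$ variable (Theorem 1 of \cite{Gaunt2019}), I would use the analogue of Theorem \ref{moment approximation} for general $r$, namely \cite[Theorem 4.1]{Gaunt2017}. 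The zero-bias identity gives $\mathbb{E}[W_lf(W_l)]=\mathbb{E}[A_{2,r}f(W_l^{*(2)})]$, after matching the normalisation so that the variance equals $r$. Then
\[
\big|\mathbb{E}h(W_l)-\mathbb{E}h(Z)\big|
=\big|\mathbb{E}\big[A_{2,r}f(W_l)-A_{2,r}f(W_l^{*(2)})\big]\big|
\le \|(A_{2,r}f)'\|\,\mathbb{E}\big|W_l-W_l^{*(2)}\big| .
\]
The coupling $W_l^{*(2)}=W_l-\frac{Z_I}{\sqrt l}+\frac{Z_I^{*(2)}}{\sqrt l}$ from Lemma \ref{lem:zerobias} is unchanged. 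So Lemma \ref{moment of W-W*(2)} again gives $\mathbb{E}|W_l-W_l^{*(2)}|\le \frac54\frac{\sqrt l}{m}$.

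\emph{Step 2: derivative bound for the operator.} I need $\|(A_{2,r}f)'\|$ with at most linear growth in $r$. I would write $(A_{2,r}f)'=(A_2f)'+(r-1)f''$ and bound each piece using the solution estimates of \cite{Gaunt2017,Gaunt2020}, which depend on $r$. The target is
\[
\|(A_{2,r}f)'\|\le (l-1)\Big[\|h'\|+\tfrac92\|\tilde h\|\Big]
\quad\text{and}\quad
\|(A_{2,r}f)'\|\le (l-1)\tfrac{99}{8}\|h'\| .
\]
An alternative is to treat $Z$ as a sum of $l-1$ independent VG$(1,0,1,0)$ components, apply the VG$(1,0,1,0)$ bounds \eqref{def:A_2f prime} and \eqref{improved bound} to each summand, and sum, which also gives the factor $l-1$.

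\emph{Step 3: assemble the bounds.} Multiplying the Step 2 bounds by $\frac54\frac{\sqrt l}{m}$ gives \eqref{Aprroximationfinal} for $h\in\mathcal H_{bW}$. For \eqref{Aprroximationfinalone}, with $\|h'\|\le1$, the product is $\frac54\cdot\frac{99}{8}=\frac{495}{32}$ times $\frac{\sqrt l(l-1)}{m}$. Finally, I would pass from $C^1_b$ to all Lipschitz $h$ by a standard smoothing argument.

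\emph{Main obstacle.} The hard part is Step 2. One must confirm that the $r$-dependent constants in the bounds for the VG Stein solution scale at most linearly in $r=l-1$, so that the single factor $(l-1)$ absorbs them. One must also check that the variance normalisation of $W_l$ is consistent with the target $\mathrm{VG}(l-1,0,1,0)$, whose variance is $l-1$. If the constants grow faster than linearly, the bound would instead carry a larger polynomial in $l$, and the linear-scaling claim would have to be checked directly in Gaunt's estimates.
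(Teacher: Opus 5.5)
There is a genuine gap, and it is the point you defer in Step~1 and flag at the end: ``matching the normalisation so that the variance equals $r$'' cannot be done. $W_l$ is fixed by the statement, and its scale is wrong for the target. This is not a check that could pass. Since $(N^{(1)}_1,\dots,N^{(1)}_l)$ is multinomial, we have $\sum_i X_i=0$ and $\mathbb{E}[X_iX_j]=-\frac{1}{l-1}$ for $i\neq j$. By independence of the two samples,
\[
\mathbb{E}W_l^2=\frac1l\sum_{i,j=1}^{l}\big(\mathbb{E}[X_iX_j]\big)^2=1+\frac{1}{l-1}=\frac{l}{l-1},
\]
whereas the $\mathrm{VG}(l-1,0,1,0)$ Stein identity with $f(x)=x$ forces $\mathbb{E}Z^2=l-1$. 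These differ for every integer $l\ge 2$.

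The mismatch is not removable. Since $W_l=\sqrt{l/(l-1)}\,W$, Theorem \ref{Thm:BPG} gives $W_l\overset{d}{\to}\frac{\sqrt l}{l-1}Z$. Because $\mathbb{E}W_l^2$ does not depend on $m$, testing against the $1$-Lipschitz function $|x|$ gives $\liminf_{m\to\infty}d_W(W_l,Z)\ge\big|\frac{\sqrt l}{l-1}-1\big|\,\mathbb{E}|Z|>0$. For $l=2$ this is simply $W_2=\sqrt2\,X_1Y_1\overset{d}{\to}\sqrt2\,Z$. Likewise $d_{bW}(W_l,Z)\to d_{bW}\big(\frac{\sqrt l}{l-1}Z,Z\big)>0$. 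Both right-hand sides in Theorem \ref{thm:err2} are $O(1/m)$, so the statement is false as written. No control of the $r$-dependence in Step~2 can rescue it. Theorem \ref{thm:error1}, your template, fails for the same reason, since $\mathbb{E}W_l^2=\frac{l}{l-1}\neq 1$.

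The ingredients you import are also unsound on their own:
\begin{itemize}
\item Lemma \ref{lem:zerobias} requires the $Z_i=X_iY_i$ to be independent, and they are not (for $l=2$, $Z_1=Z_2$).
\item The coupling $W_l^{*(2)}=W_l-\frac{Z_I}{\sqrt l}+\frac{Z_I^{*(2)}}{\sqrt l}$ is invalid for order-$2$ zero biasing even with independent summands, because $A_2f(x)=xf''(x)+f'(x)$ does not commute with translations. For $W=Z_1+Z_2$ with independent standard normals, it gives $\mathbb{E}(W^{*(2)})^2=\frac43$, while the definition with $f(x)=x^3$ forces $\frac23$.
\item Lemma \ref{moment of W-W*(2)} uses $\mathbb{E}|X_i|^3\le\sqrt{l/m}$. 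That is really a bound on the signed moment $\mathbb{E}X_i^3$, and it contradicts $\mathbb{E}|X_i|^3\ge 1$ from the same proof. The bound there is therefore $O(1)$, not $O(1/m)$.
\item Your Step~2 alternative, summing $\mathrm{VG}(1,0,1,0)$ solution bounds over $l-1$ components, does not bound the $\mathrm{VG}(l-1,0,1,0)$ Stein solution.
\end{itemize}

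The paper's proof takes a different route from yours. It splits the operator into $l-1$ pieces $W_{(i)}f''(W_l)+f'(W_l)-W_{(i)}f(W_l)$, conditions on the remaining summands, and applies Theorem \ref{thm:error1} to $f(\cdot+V_i)$. It fails for the same underlying reasons. It miscounts, since $W_l=l^{-1/2}\sum_{i=1}^{l}W_{(i)}$. It treats the $W_{(i)}$ as independent. And the shifted function solves no $\mathrm{VG}(1,0,1,0)$ Stein equation.

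A provable result would have to compare $W_l$ with $\frac{\sqrt l}{l-1}Z$ (equivalently, $W$ with $Z/\sqrt{l-1}$) and handle the multinomial dependence directly.
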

\begin{proof}
 (i)  Let $W_{(i)} := X_iY_i$, so that $ W_l =\frac{1}{\sqrt{l}} \sum_{i=1}^l W_{(i)}$. Using the $\mathrm{VG}(l-1,0,1,0)$ Stein equation \eqref{gen stein equn}, we have 
\begin{align}\label{First:BD}
  \nonumber \left| \mathbb{E}h(W_l)- \mathbb{E} h(Z)  \right| &= \mathbb E\Big[
W_{l}f^{\prime \prime}(W_l)
+ (l-1)f^{\prime}(W_l)
-W_{l}f(W_l)
\Big]  
\\
\nonumber &=  \sum_{i=1}^{l-1} \mathbb{E}\Big[
W_{(i)}f^{\prime \prime}(W_l)
+ f^{\prime}(W_l)
-W_{(i)}f(W_l)
\Big]
\\
\nonumber &= \sum_{i=1}^{l-1}\mathbb{E}\Big[\mathbb{E}[
W_{(i)}f^{\prime \prime}(W_l)
+ f^{\prime}(W_l)
-W_{(i)}f(W_l)\big| (W_{(1)},\ldots,\\
 & \quad\quad W_{(i-1)}, W_{(i+1)},\dots,W_{(l)}]\Big].
\end{align}
\noindent Fix \(i\), and define $V_i:=\sum_{l \neq i}W_{(l)}.$ Then $W_l=W_{(i)}+V_i.$ Next let
\[
\mathcal F_i:=\sigma\big(W_{(1)},\dots,W_{(i-1)},W_{(i+1)},\dots,W_{(l)}\big) \qquad \text{($\sigma$ - algebra)}.
\]
\noindent Hence,
\begin{align}\label{First:BD1}
\nonumber &\mathbb E\Big[
W_{(i)}f''(W_l)+f'(W_l)-W_{(i)}f(W_l)
\,\Big|\,\mathcal F_k
\Big]
\\
&=
\mathbb E\Big[
W_{(i)}f''(W_{(i)}+V_i)
+f'(W_{(i)}+V_i)
-W_{(i)}f(W_{(i)}+V_i)
\Big].
\end{align}
\noindent
Define a shifted function $g_{V_i}(x):=f(x+V_i).$ Then $g_{V_i}^{(m)}(x)=f^{(m)}(x+V_i),$
and therefore,
\begin{align}
\nonumber 
\|g_{V_i}^{(m)}\| &= \sup_{x\in\mathbb R}|f^{(m)}(x+V_i)|
\nonumber\\
&= \sup_{y\in\mathbb R}|f^{(m)}(y)|
\\
&= \|f^{(m)}\|.
\end{align}
\noindent Hence, the derivative norms are invariant under translations. Using \eqref{First:BD1} with $g_{V_i}(x):=f(x+V_i)$ and then applying Theorem \ref{thm:error1},  we get
\begin{align}\label{Stein eqn bound}
  \big|
&\mathbb E[
W_{(i)}g_{V_i}''(W_{(i)})
+g_{V_i}'(W_{(i)})
-W_{(i)}g_{V_i}(W_{(i)})
]
\big| \leq  \frac{5}{4}\frac{\sqrt{l}}{m}\Big[\left\| h^{\prime} \right\|
+  \frac{9}{2}
\left\| \tilde{h}(x) \right\|\Big], \qquad l \geq 2.
\end{align}
\noindent Using \eqref{Stein eqn bound} in \eqref{First:BD}, we get \eqref{Aprroximationfinal}.

\vskip 1ex
\noindent (ii) Following steps similar to case 1, noting that $\|(A_2f)^{\prime}\|$ is bounded as in \eqref{improved bound}, we get \eqref{Aprroximationfinalone}. 

\vskip 1ex
\noindent This completes the proof.
\end{proof}
\begin{rem}
    Note that if $m \to \infty$, then from \eqref{Aprroximationfinal} and \eqref{Aprroximationfinalone}, we have $W_l \overset{\mathcal{d}}{\to} Z$, where $Z \sim \mathrm{VG}(l-1,0,1,0)$, $l \geq 2$. Note that  Beaulieu {\it et al.} \cite{BMO2021} prove that the partial sums of triplewise independent rvs converge to a VG distribution. We obtain the order of convergence $O(m^{-1})$ (or, equivalently, $O(n^{-\frac{1}{2}})$) for this limiting result, which is novel in our opinion. 
\end{rem}


\end{document}